\newtheorem*{rep@theorem}{\rep@title}
\newcommand{\newreptheorem}[2]{%
\newenvironment{rep#1}[1]{%
 \def\rep@title{#2~\ref{##1}}%
 \begin{rep@theorem}}%
 {\end{rep@theorem}}}
\theoremstyle{plain}
\newtheorem*{thm*}{Theorem}
\newtheorem{thm}{Theorem}[section]
\newtheorem{cor}[thm]{Corollary}
\newtheorem{lem}[thm]{Lemma}
\newtheorem*{lem*}{Lemma}
\newtheorem{conj}[thm]{Conjecture}
\theoremstyle{definition}
\newtheorem{dfn}[thm]{Definition}
\newtheorem{rmk}[thm]{Remark}
\newcommand{\htpy}{\sim}
\newcommand{\rsom}{\cR_{\text{om}}}%
\newcommand{\rspol}{\cR_{\text{pol}}}%
\newcommand{\rsins}{\cR_{\text{ins}}}%
\newcommand{\rsdel}{\cR_{\text{del}}}%
\newcommand{\rsomins}{\cR_{\text{om,ins}}}%
\newcommand{\rssphere}{\cS}%
\newcommand{\GA}[1]{\operatorname{Aff}({#1})} %
\newcommand{\GL}[1]{\operatorname{GL}({#1})} %
\newcommand{\PGL}[1]{\operatorname{PGL}({#1})} %
\newcommand{\Mob}[1]{\operatorname{M\ddot{o}b}({#1})} %
\newcommand{\Sim}[1]{\operatorname{Sim}({#1})} %
\newcommand{\Ort}[1]{\operatorname{O}({#1})} %
\newcommand{\NP}{\mathbf{N}}%
\newcommand{\Del}{\cD}
\newcommand{\Sp}[1]{\SS^{#1}}%
\newcommand{\Bll}[1]{\BB^{#1}}%
\newcommand{\sterproj}{\phi}%
\newcommand{\homog}[1]{\operatorname{hom}({#1})}
\DeclareMathOperator{\signe}{sign}
\newcommand{\Lw}{\Lambda}   %
\newcommand{\wh}{\widehat}
\newlength{\dhatheight}
\newcommand{\wwh}[1]{%
\settoheight{\dhatheight}{\ensuremath{\widehat{#1}}}%
\addtolength{\dhatheight}{-0.4ex}%
\widehat{\smash{\widehat{#1}}\phantom{\rule{3pt}{\dhatheight}}}}
\def\cD{\mathcal{D}}
\def\cR{\mathcal{R}}
\def\cS{\mathcal{S}}
\def\cT{\mathcal{T}}
\def\BB{\mathbb{B}}
\def\QQ{\mathbb{Q}}
\def\RR{\mathbb{R}}
\def\RR{\mathbb{R}}
\def\SS{\mathbb{S}}
\def\ZZ{\mathbb{Z}}
\newcommand{\vanish}[1]{}
\def\wh{\widehat}
\def\oli{\overline}
\def\uli{\underline}
\def\({\left(}
\def\){\right)}
\def\no={\,{\,|\!\!\!\!\!=\,\,}}
\def\no={\,{\,|\!\!\!\!\!=\,\,}}
\def\conv{\text{\rm{conv}}}
\def\cone{\text{\rm{pos}}}
\DeclareMathOperator{\verts}{vert}
\def\id{\rm id}
\newcommand{\xqedhere}[2]{%
  \rlap{\hbox to#1{\hfil\llap{\ensuremath{#2}}}}}
\newcommand\Defn[1]{\emph{#1}}
\newcommand\defn[1]{\Defn{#1}}
\newcommand{\cm}[1]{}
\newcommand\mr[1]{\mathrm{#1}}
\newcommand\ol[1]{\overline{#1}}
\newcommand{\bigslant}[2]{{\raisebox{.3em}{$#1$} \Big/ \raisebox{-.3em}{$#2$}}}
\DeclareMathOperator\pos{pos}
\newcommand{\set}[2]{\ensuremath{\left\{#1\,\middle|\,#2\right\}}} %
\title[Universality theorems for inscribed polytopes and Delaunay triangulations]{Universality theorems for inscribed polytopes\\ and Delaunay triangulations}
\author{Karim A.~Adiprasito}
\author{Arnau Padrol}
\author{Louis Theran}
\address{Institut des Hautes \'Etudes Scientifiques, Bures-sur-Yvette, France}
\email{adiprasito@math.fu-berlin.de, adiprasito@ihes.fr}
\address{Institut f\"ur Mathematik, %
Freie Universit\"at Berlin, %
Germany}
\email{arnau.padrol@fu-berlin.de, theran@math.fu-berlin.de}
\date{}
\thanks{K.~A.~Adiprasito acknowledges support by an EPDI postdoctoral fellowship and by the Romanian NASR,
CNCS --- UEFISCDI, project PN-II-ID-PCE-2011-3-0533. The research of A.~Padrol is supported by the DFG
Collaborative Research Center SFB/TR~109 ``Discretization
in Geometry and Dynamics''.  The research of L. Theran supported by 
the European Research Council under the European Union's Seventh
Framework Programme (FP7/2007-2013) / ERC grant agreement no 247029-SDModels.
A preliminary version of some of the results of this paper has appeared in~\cite{PadrolTheran14}.}
\begin{document}
\begin{abstract}
We prove that every primary basic semialgebraic set is homotopy equivalent to the set of inscribed realizations (up to M\"obius transformation) of a polytope. If the semialgebraic set is moreover open, then, in addition, we prove that (up to homotopy) it is a retract of the realization space of some inscribed neighborly (and simplicial) polytope. We also show that all algebraic extensions of $\QQ$ are needed to coordinatize inscribed polytopes. 
These statements show that inscribed polytopes exhibit the Mn\"ev universality phenomenon. 

Via stereographic projections, these theorems have a direct translation to universality theorems for Delaunay subdivisions. In particular, our results imply that the realizability problem for Delaunay triangulations is polynomially equivalent to the existential theory of the reals.

\end{abstract}

\maketitle

\section{Introduction}
The Delaunay subdivision of a set of points in~$\RR^d$ plays a central role in 
computational geometry~\cite{Edelsbrunner2006}.  A few applications are:
nearest-neighbor search, pattern matching, clustering, and mesh generation.
Via stereographic projection, Delaunay subdivisions can be lifted to \emph{inscribed polytopes} 
\cite{Brown1979}---those with all vertices on the unit sphere---in one dimension higher, 
so that Delaunay triangulations lift to simplicial inscribed polytopes. The study of
inscribed polytopes, and in particular the problem of deciding whether a 
polytope admits an inscribed realization, is a classical subject 
\cite{Steiner1832}\cite{Steinitz1928}\cite{Rivin1994} in which many
fundamental questions are still open~\cite{GonskaZiegler2011}.

In this paper, we are interested in \emph{realization spaces} of a \emph{fixed 
combinatorial type} of Delaunay 
subdivision/inscribed polytope. For a configuration $A$ of $n$ points in 
$\RR^d$, which we assume to be labeled by $[n]=\{1,\dots,n\}$, the cells of its 
Delaunay subdivision are represented by a family $T$ of subsets of $[n]$. The 
\emph{realization space} $\rsdel({T})$ is a parametrization of the set of 
all configurations of $n$ labeled points whose Delaunay triangulation has the 
combinatorial structure of $T$ (as a polytopal complex with vertex set $[n]$).

Analogously, $\rsins({P})$, the realization space of an inscribed polytope $P$, 
is a parametrization of configurations of $n$ points in the unit sphere whose 
convex hull has the same face lattice as $P$.

In dimension $2$, results of~\cite{Rivin1994} imply that both of 
these realization spaces are homeomorphic to a polytope (that 
depends on $T$ only).  This completely determines the topological 
structure of both realization spaces.  For example, they are
always \emph{connected} and \emph{contractible}.

\subsection{Universality for Delaunay subdivisions}
Our main results show that, in higher dimensions, 
the situation is completely different: the realization space 
of a $d$-dimensional Delaunay subdivision (or triangulation) can be 
arbitrarily complicated.%
\begin{repthm}{thm:lawrence}
For every primary basic semi-algebraic set there is a Delaunay subdivision and an 
inscribed polytope whose realization space is homotopy equivalent to $S$.
\end{repthm}
Said differently, realization spaces of inscribed polytopes 
exhibit the \emph{topological universality} in the sense of Mnëv \cite{Mnev1988}.  We 
also show that these realization spaces exhibit \emph{algebraic universality} (also 
a notion from \cite{Mnev1988}).

\begin{repcor}{cor:lawrenceAlg}
For every finite field extension $F/\QQ$ of the rationals, there is a realizable
Delaunay subdivision (equivalently, an inscribed polytope) 
that cannot be realized with coordinates in $F$.
\end{repcor}

\subsection{Universality for Delaunay triangulations}
The subdivisions constructed in proof of Theorem \ref{thm:lawrence} are far from being 
triangulations. To insist on triangulations and simplicial polytopes requires 
different tools. We adapt a recent proof of the Universality Theorem 
for simplicial polytopes \cite{AdiprasitoPadrol2014} to obtain a weak universality 
theorem for Delaunay triangulations. 

\begin{repthm}{thm:neighborly}
For every open primary basic semi-algebraic set $S$ there is a (neighborly) Delaunay triangulation and an 
inscribed simplicial (neighborly) polytope,
such that $S$ is a retract of their realization spaces, up to homotopy equivalence.
\end{repthm}

\subsection{Complexity}
The complete statements of these theorems provide
linear bounds for the number points of the triangulation and the dimension in terms
of the \emph{arithmetic complexity} of the corresponding semi-algebraic sets.
Moreover, $S$ is non-empty \emph{if and only if} $\rsdel(T)$ is. Such a triangulation 
can be computed from $S$ in polynomial time, which shows that deciding whether a Delaunay
triangulation is realizable is hard. Indeed, the proof of Theorem~\ref{thm:neighborly} 
shows that deciding realizability of Delaunay triangulations is as hard as deciding
realizability of rank~$3$ oriented matroids~\cite{Mnev1988}\cite{Shor1991}.

\begin{repcor}{cor:hard}
The {realizability problem for Delaunay triangulations and inscribed simplcial 
polytopes} is polynomially equivalent to the \emph{existential theory of the 
reals (ETR)}. In particular, it is NP-hard.
\end{repcor}

Another consequence of this effective bound is that the number of 
connected components of the realization space of a $d$-dimensional Delaunay 
triangulation can be exponential in~$d$.
\begin{repcor}{cor:asymptotic}
For every $m\geq 1$ there exist configurations of $O(m)$ points in general 
position in $\RR^{O(m)}$ whose 
realization spaces as Delaunay triangulations have at least $2^m$ connected 
components.
\end{repcor}

Our smallest example of a triangulation with disconnected realization space is 
in $\RR^{25}$, which leaves open the existence of these configurations in $\RR^d$ for each 
$3\leq d\leq 24$.
\begin{repcor}{cor:small-config}
There is a $25$-dimensional configuration of $30$ points whose Delaunay 
triangulation has 
a disconnected realization space.
\end{repcor}

\subsection{Context and related work}
The realization spaces of $3$-dimensional inscribed polyhedra are well 
understood.  On the other hand, there is a rich theory of the ``wildness'' of 
realization spaces of higher-dimensional polyhedra can be quite wild.  Here is 
the background and connection with our results.

\subsubsection{Dimension $2$ and inscribable polyhedra}  
Theorems \ref{thm:lawrence} and \ref{thm:neighborly} and their corollaries 
should be contrasted with fundamental results of Rivin 
\cite{Rivin1994}\cite{Rivin1996}\cite{Rivin2003} that connect $2$-dimensional 
Delaunay subdivisions with metric properties of hyperbolic $3$-dimensional 
polyhedra.

Rivin's work in particular entailed that: (1) whether a (combinatorial) planar 
graph has a drawing as a Delaunay triangulation can be tested in polynomial 
time; (2) that the realization space of a planar Delaunay triangulation is 
homeomorphic to a polyhedron of so-called \emph{angle structures} (see 
\cite{FuterGueritaud11} for an elementary introduction to the method), and, in 
particular, connected.

In the language of polyhedra, (1) says that whether a graph is the $1$-skeleton of an 
inscribable polyhedron is efficiently 
checkable; and (2) says that the set of inscribed realizations is convex (and in particular 
contractible) in the parameterization by dihedral angles.  

The question of whether every polyhedron is inscribable had been first raised  
by Steiner in 1832~\cite{Steiner1832}, with the first negative examples 
given by Steinitz in 1928~\cite{Steinitz1928}. This makes such a sharp 
characterization 
of the inscribable types and their realization spaces a surprising breakthrough. 
In contrast, Theorems~\ref{thm:lawrence} and \ref{thm:neighborly} 
suggest that a polynomial time characterization for all dimensions is, under 
standard conjectures, not possible.

\subsubsection{Higher dimensions and universality}
A general principle in the theory of realization spaces for (semi-)algebraically defined 
objects is succinctly put in \cite{Vakil06}:  ``Unless there is some a priori reason otherwise, 
the deformation space may be as bad as possible.''

Underlying a large number of these kinds of phenomena is a paradigmatic result 
of Mn\"ev. The Universality Theorem states that for every [open] primary basic 
semi-algebraic set there is a [uniform] oriented matroid of rank~$3$ whose 
realization space is stably equivalent to it. (The survey 
\cite{RichterGebert1998} provides an accessible presentation of this and related 
results and their proofs, and a more computationally oriented approach can be 
found in \cite{Shor1991}.)

The Universality Theorem in particular entails a negative answer to Ringel's 1956 
\emph{isotopy problem}, which asked whether, given two point configurations $A_0$ and $A_1$ 
with the same oriented matroid (order type), is it always possible to find a continuous path 
of point configurations $\{A_t\}_{0\leq t\leq 1}$ with the same oriented matroid?  (This 
weaker result also follows via examples from
\cite{JaggiManiLevitskaSturmfelsWhite1989}
\cite{RichterGebert1996}
\cite{Suvorov1988}
\cite{Tsukamoto2013}
\cite{Vershik1988}
\cite{White1989}.)
Actually, the Universality Theorem shows that there are oriented matroids 
that have realization spaces with arbitrarily many components.  

Another straightforward consequence of the Universality Theorem is that 
determining realizability 
of oriented matroids is polynomially equivalent to the \emph{existential theory 
of the reals}, and in particular NP-hard \cite{Mnev1988}\cite{Shor1991}.

Via a reduction given in \cite{Mnev1988}, realization spaces of polytopes also 
exhibit universality: for every semi-algebraic variety $S\subset \RR^s$, there 
is a polytope $P$ whose realization space is stably equivalent to~$S$. Here, the 
realization space of a polytope $P$ is the set of point configurations whose 
convex hull is combinatorially equivalent to $P$. In principle, this polytope 
might be of a very high dimension. However, 
Richter-Gebert~\cite{RichterGebert1997} made a breakthrough when he proved that 
there is universality already in realization spaces of $4$-dimensional 
polytopes.  Again, there is a contrast with $3$-polytopes, which have 
contractible realization spaces (see, e.g.,~\cite[Part IV]{RichterGebert1997}).

When the semi-algebraic sets are open, one can furthermore require the polytopes
to be simplicial (and even neighborly), although only in arbitrarily high 
dimensions~\cite{AdiprasitoPadrol2014}\cite{Mnev1988}. 
Universality for simplicial polytopes in fixed dimension remains wide open.

However, the existence of Delaunay triangulations with a disconnected 
realization space is not a direct consequence of the results of Richter-Gebert 
and Mn\"ev. Indeed, even if $P$ is a polytope with a disconnected realization 
space, it could be that the variety $\rssphere$ that certifies that all the 
vertices lie on the unit sphere does not intersect with all its connected 
components---or any of them. Hence, to present a 
Delaunay triangulation with a disconnected realization space, one has to show 
that $\rssphere$ hits at least two of these connected components or that the 
intersection of $\rssphere$ with a connected component is disconnected.

On the other hand, some universality phenomena from the theory of general polytopes
are already known to carry over to the case of inscribed polytopes; for instance, there are infinitely many 
projectively unique {inscribed} polytopes even in \emph{bounded} dimension, and 
every inscribed polytope is the face of some projectively unique inscribed polytope, 
cf.\ \cite{AdiprasitoZiegler2014}.

\subsection{Open problem: universality in fixed dimension}

Theorems~\ref{thm:lawrence} and \ref{thm:neighborly} are the first step towards 
a universality theory for Delaunay triangulations and leave several open questions. 
First of all, a strong version of Theorem~\ref{thm:neighborly} should state homotopy equivalence
between the realization space of the Delaunay triangulation and the semi-algebraic set.

The main challenge is to prove a Universality Theorem for Delaunay Triangulations in fixed dimension.
Recall that polytopes present universality already in dimension~$4$ (for simplicial 
polytopes this is also conjectured). Since the results here run more or less in parallel
with the development of the theory for polytopes, the strongest conjecture we can make is:
\begin{conj}\label{conj:universality}
For every [open] primary basic semi-algebraic set $S$ defined over $\ZZ$
there is a $3$-dimensional Delaunay [triangulation] subdivision whose realization space 
is homotopy equivalent to~$S$. 
\end{conj}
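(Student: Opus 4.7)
The natural approach is to parallel Richter-Gebert's proof \cite{RichterGebert1997} of universality for $4$-polytopes, working in the M\"obius-equivariant setting rather than the projective one. Via stereographic projection, a $3$-dimensional Delaunay subdivision lifts to an inscribed $4$-polytope (with the open/triangulation case corresponding to simplicial, indeed neighborly, inscribed polytopes). The conjecture therefore reduces to a universality statement for inscribed $4$-polytopes up to M\"obius transformations of $\SS^3$.

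My plan has three main steps. First, use Mn\"ev's rank-$3$ universality to represent the given primary basic semi-algebraic set $S$ as the realization space of a planar point--line configuration, or equivalently of a rank-$3$ oriented matroid $M$. Second, for each elementary arithmetic operation in a straight-line program computing $S$, build an inscribed $4$-dimensional \emph{gadget}---a small inscribed polytope whose realization space on $\SS^3$, modulo M\"obius transformations, encodes that operation---and splice these gadgets together following the program. The inscribed connected-sum operations of \cite{AdiprasitoZiegler2014} provide a natural tool for keeping the result inscribed at every step. Third, verify by a fibration argument in the spirit of \cite{RichterGebert1997} that the realization space of the assembled polytope deformation-retracts onto the realization space of $M$, which is in turn homotopy equivalent to $S$.

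The crux of the argument will be the construction of the arithmetic gadgets. Richter-Gebert's original gadgets rely on Pappus-- and Desargues-type incidence theorems that belong to projective geometry, whereas the inscribed setting has $\mathrm{O}^+(1,4)$ as its symmetry group and \emph{inversive} incidence relations (Miquel, cross-ratio, Ptolemy) as its basic tools. A promising route is to parametrize the arithmetic by cross-ratios along a distinguished circle of $\SS^3$: cross-ratios are M\"obius-invariant and already encode the projective arithmetic of $\RR\PP^1$, so the classical von Staudt ``algebra of throws'' should transport, provided one can realize the relevant cross-ratio identities as face incidences of an inscribed polytope.

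A secondary but serious obstacle is preserving the Delaunay empty-ball condition under gluing. An inscribed $4$-polytope descends to a genuine Delaunay subdivision of $\RR^3$ only once the empty circum-sphere condition is verified on every facet, and ad-hoc connected sums can easily violate this. One strategy is to separate the gadgets by ``neutral'' inscribed pieces, rigid enough to guarantee the empty-ball property but flexible enough to leave the homotopy type of the realization space unchanged. This step has no direct precedent in polytope universality and is where I expect the main technical difficulty to lie.
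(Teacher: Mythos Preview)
This statement is Conjecture~\ref{conj:universality}, which the paper explicitly leaves \emph{open}; there is no proof in the paper to compare against. What you have written is not a proof but a research outline, and you acknowledge as much in your final paragraph.

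More importantly, the approach you propose is precisely the one the paper singles out as unlikely to succeed without genuinely new ideas. Immediately after stating the conjecture the authors write that ``connected sums, the main ingredient of Richter-Gebert's proof of the Universality Theorem for $4$-polytopes, do not behave well with respect to inscribability,'' and that ``a new set of tools will be needed.'' Your plan is exactly to parallel Richter-Gebert via inscribed gadgets glued by connected sums. You do flag the empty-ball obstruction yourself, but appealing to the inscribed connected sums of \cite{AdiprasitoZiegler2014} does not resolve it: those constructions show that certain polytopes remain \emph{inscribable} after gluing, not that the realization space of the glued object fibers over the realization spaces of the pieces in a controlled way. For Richter-Gebert's argument to go through one needs the gluing to be homotopy-transparent (the ``fibration argument'' you allude to), and it is exactly this step that fails: inscribing the summands imposes global metric constraints that couple the pieces, so the fibers are no longer obviously contractible, or even nonempty in every component.

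There is also a secondary gap in your step two. You propose to encode arithmetic via cross-ratios on a distinguished circle of $\Sp{3}$, but you give no indication of how to force cross-ratio identities as \emph{face incidences} of an inscribed polytope. Richter-Gebert's gadgets rely on projective incidence theorems that become polytope face conditions via Lawrence-type constructions; the inversive analogues (Miquel, Ptolemy) do not have a known translation into facet structure of inscribed $4$-polytopes. Without that translation, the ``crux'' of your argument is an assertion, not a construction. In short: your proposal restates the problem and correctly identifies where the difficulty lies, but it does not supply the missing idea the paper says is required.
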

 
Since connected sums, the main ingredient of Richter-Gebert's proof of the Universality Theorem for
$4$-polytopes~\cite{RichterGebert1997}, do not behave well with respect to inscribability, it seems that 
a new set of tools will be needed to prove our conjecture.

\subsection{Reading guide}
The rest of this paper is organized as follows: 
Section~\ref{sec:preliminaries} introduces some necessary notation. 
Section~\ref{sec:inscribed} is devoted to the Universality Theorem for inscribed
polytopes and Delaunay subdivisions. Simplicial polytopes and triangulations
require different tools, and are studied in Section~\ref{sec:simpinscribed}.

\section{Preliminaries}\label{sec:preliminaries}
\subsection{Notation}
For a quick reference for oriented matroids, polytopes and Delaunay triangulations,
we refer to the chapters \cite{RichterGebertZiegler1997}, \cite{HenkRichterGebertZiegler1997}
and \cite{Fortune1997} of the handbook~\cite{HanbookDCG}, respectively. Our notation
coincides mostly with theirs.

Let $V$ be a configuration of $n$ vectors in $\RR^r$, which are labeled by 
elements in $[n]=\{1,\dots,n\}$. Consider the map $\chi^V:[n]^{r}\mapsto 
\{0,1,-1\}$ that for each tuple $(i_1,\dots,i_{r})$ assigns the sign 
\[\chi^V(i_1,\dots,i_{r})=\signe\det(v_{i_1},\dots,v_{i_{r}}).\]
The map $\chi^V$ is called the \defn{chirotope} of $V$ and determines its 
\defn{oriented matroid}, which has \defn{rank}~$r$ 
(see~\cite{OrientedMatroids1993} for a comprehensive introduction to oriented 
matroids).

Now, let $A$ be a configuration of $n$ points in $\RR^d$. The \defn{homogenization} of 
$A$ is a vector configuration $\homog{A}=\{\bar a_1,\dots,\bar 
a_n\}\subset\RR^{d+1}$ obtained by appending $1$ as the last coordinate of the 
points of $A$: $\bar a_i=(a_i,1)$.
The \defn{oriented matroid} of $A$ is defined to be the oriented matroid of its 
homogenization~$\homog{A}$. 

The oriented matroid of a point configuration is always \defn{acyclic}.
A point configuration $A$ is in \defn{general position} if no $d+1$ points of 
$A$ lie in a common hyperplane, and then it defines a \defn{uniform} matroid. 

The convex hull of $A$ is a \defn{polytope} $P=\conv(A)\subset\RR^d$ and the 
intersection of $P$ with a supporting hyperplane is a \defn{face} of $P$. Faces 
of dimensions $0$ and $d-1$ are called \defn{vertices} and \defn{facets}, 
respectively. A point configuration $A$ is in \defn{convex position} if it 
coincides with $\verts(P)$, the set of vertices of $P=\conv(A)$. If $A$ is in 
convex position, each face $F$ of $P$ can be identified with the set of labels 
$\set{i\in[n]}{a_i\in F}$. The \defn{face lattice} of $P$ is then a poset of 
subsets of $[n]$. In this context, two vertex-labeled polytopes are 
\defn{combinatorially equivalent}, denoted $P\simeq Q$, if their face lattices 
coincide. We call $P$ \defn{inscribed} if all its vertices lie in the unit 
sphere~$\Sp{d-1}$, and \defn{inscribable} if it is combinatorially equivalent to 
an inscribed polytope.

The face lattice of a polytope $P$ coincides with that of the \defn{convex cone} obtained
as the positive hull of its homogenization 
$\cone(\hom(P)):=\set{\sum \lambda_i x_i}{\lambda_i \geq 0, \, x_i\in \hom(P)}$. 

The oriented matroid of a polytope $P$ is \defn{rigid} if its face lattice of 
determines the oriented matroid of its set of vertices (see \cite[Section 
6.6]{Ziegler1995}). In the language of the next section, a polytope $P$ is rigid 
if and only if $\rsom(\verts({P}))= \rspol(P)$.

A \defn{subdivision} of a point configuration $A$ is a collection $\cT$ of 
polytopes with vertices in $A$, which we call \defn{cells}, that cover the 
convex hull of $A$ and such that any pair of polytopes of $\cT$ intersect in a 
common face. A \defn{triangulation} of $A$ is a subdivision where all the cells 
are simplices. Again, a subdivision of $A$ can be identified with a poset of 
subsets of~$[n]$. Two subdivisions $\cT$ and $\cT'$ of two labeled 
configurations $A$ and $A'$ are \defn{combinatorially equivalent}, denoted by 
$\cT\simeq\cT'$, if their respective posets coincide.

The \defn{Delaunay subdivision} $\Del(A)$ of a point configuration 
$A\subset\RR^d$ is the subdivision that consists of all cells defined by the 
\defn{empty circumsphere condition}: $S \in \Del(A)$ if and only if there exists 
a $(d-1)$-sphere that passes through all the vertices of $S$ and all other 
points of $A$ lie outside this sphere. If $A$ is in general position and no 
$d+2$ points of $A$ lie on a common sphere, then the empty circumsphere 
condition always defines a simplex of~$A$, and hence the Delaunay subdivision is 
a triangulation,  the \defn{Delaunay triangulation} of $A$.

We denote \defn{homeomorphic} sets $S$ and $T$ by $S\cong T$ and \defn{homotopic}
sets by $S\htpy T$ (see \cite[Section~58]{Munkres} for 
definitions).  We also recall that a continuous map $f : S\to T$ is a  \defn{retraction} 
of $S$ onto $T$ if there
is a continuous map $g:T\to S$ such that $f\circ g=\id$. 
If a retraction exists, then $T$ is a \defn{retract} of~$S$. If moreover $g \circ f$ is homotopic
to the identity, then $T$ is a \defn{deformation retract} of $S$, and $T\htpy S$.

\subsection{Realization spaces}\label{sec:realizationspaces}

We will work with the following \defn{realization spaces}. 
Observe that for oriented matroids and polytopes, we work with the 
acyclic vector configurations arising from homogenization. (This approach is 
convenient for technical reasons, and used often, for example in~\cite{OrientedMatroids1993}.)
We also identify a $d$-dimensional configuration of $n$ points or vectors with the 
corresponding tuple in $\RR^{d\times n}$ containing the 
coordinates, ordered according to their labels.

\begin{itemize}[$\bullet$]
\item The \defn{realization space of an oriented matroid} $M$ (of rank $d+1$ with
$n$ elements), that we denote $\rsom(M)\subset \RR^{(d+1)\times n}$, is the set of vector
configurations that realize $M$, up to linear transformation:
\[\rsom(M)=\bigslant{\set{V\in \RR^{(d+1)\times n}}{V \text{ realizes }M}}{\GL{\RR^{d+1}}}.\]
\item The \defn{realization space of a polytope} $P$
(with $n$ vertices in~$\RR^d$), that we denote
$\rspol(P)\subset \RR^{(d+1)\times n}$, is the set of acyclic configurations whose positive span 
is combinatorially equivalent to the cone over $P$, up to linear transformation:
\[\rspol(P)=\bigslant{\set{V\in \RR^{(d+1)\times n}}{\cone(V)\simeq P}}{\GL{\RR^{d+1}}}.\]
\item The \defn{realization space of an inscribed polytope} $P$ 
(with $n$ vertices in~$\RR^d$), that we denote
$\rsins(P)\subset (\Sp{d-1})^n \subset \RR^{d\times n}$, is the set of inscribed 
point configurations whose convex hull
is combinatorially equivalent to $P$, up to M\"obius transformation:
\[\rsins(P)=\bigslant{\set{A\in (\Sp{d-1})^n}{\conv(A)\simeq P}}{\Mob{\Sp{d-1}}}.\]
\item The \defn{realization space of a Delaunay subdivision} $T$ (of $n$ points 
in $\RR^d$), that we
denote $\rsdel(T)\subset \RR^{d\times n}$, is the set of point configurations whose
Delaunay triangulation is combinatorially equivalent to $T$, 
up to similarity:
\[\rsdel(T)=\bigslant{\big\{ A\in \RR^{dn} \,\big|\, \Del(A)\simeq T\}}{\Sim{\RR^d}}.
\]\end{itemize}

For a given point configuration $A$, we abuse notation and use $\rsom(A)$, 
$\rspol(A)$, $\rsins(A)$, 
and $\rsdel(A)$
to denote the realization spaces $\rsom(\chi^A)$, $\rspol(\conv(A))$, 
$\rsins(\conv(A))$ and $\rsdel(\Del(A))$, respectively.

\begin{rmk}
A number of alternative definitions are possible. For example, 
by factoring different transformation groups or by considering non-homogenized configurations.
Most of these definitions are actually homotopy-equivalent, as we discuss below, and hence our results hold anyway.
These definitions are natural because the groups preserve spheres.

We also leave out the combinatorics of the boundary for Delaunay subdivisions, 
which amounts to take also into account the empty spheres that go through the ``point at infinity''.
Although these two definitions are not necessarily homotopy-equivalent, again, 
our results hold for both kinds of definition, see also Remark~\ref{rmk:fixedboundary}.
\end{rmk}

It is sometimes useful to commute between different realization spaces; we state the straightforward lemmata, without detailed proof, here:
\begin{lem}[Realization spaces of matroids]\label{lem:homogenization}
Let $M$ be an acyclic oriented matroid.  Then the following three %
spaces are homotopy equivalent:
\begin{compactitem}[$\circ$]
\item The realization space of homogeneous configurations, modulo linear transformations:
\begin{equation}\label{eq:lin-relspace}
	\rsom(M)%
	=\bigslant{\set{V\in \RR^{(d+1)\times n}}{V \text{ realizes }M}}{\GL{\RR^{d+1}}}
\end{equation}
\item  The realization space of affine configurations, modulo admissible projective transformations:
\begin{equation}\label{eq:proj-relspace}
	\rsom^{\textsf{proj}}(M)=\bigslant{\set{V\in \RR^{d\times n}}{V \text{ realizes }M}}{\PGL{\RR^{d}}}
\end{equation}
\item The realization space of affine configurations, modulo affine transformations:
\begin{equation}\label{eq:aff-relspace}
	\rsom^{\textsf{aff}}(M)=\bigslant{\set{A\in \RR^{d\times n}}{A \text{ realizes }M}}{\GA{\RR^{d}}}
\end{equation}
\end{compactitem}
\end{lem}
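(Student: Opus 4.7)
The plan is to construct explicit continuous maps between the three realization spaces and verify they are either mutually inverse homeomorphisms or fibrations with contractible fibers, yielding homotopy equivalences in each case.

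First I would handle the equivalence between \eqref{eq:lin-relspace} and \eqref{eq:aff-relspace} via homogenization and dehomogenization. Given an affine realization $A \in \RR^{d\times n}$ of $M$, its standard homogenization $\hom(A) = ((a_i,1))_{i=1}^n$ is a vector realization of $M$. Since $\GA(\RR^d)$ embeds into $\GL(\RR^{d+1})$ precisely as the stabilizer of the affine hyperplane $\{x_{d+1}=1\}$, homogenization descends to a continuous map $h : \rsom^{\textsf{aff}}(M) \to \rsom(M)$. For the inverse, given $[V]\in\rsom(M)$, the acyclicity of $M$ ensures that the set of linear functionals $\ell$ with $\ell(v_i)>0$ for all $i$ is a nonempty open convex cone, and hence contractible. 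After applying a $\GL$-transformation sending such an $\ell$ to $e_{d+1}^{*}$ and rescaling each column to have last coordinate $1$, the residual ambiguity is exactly $\GA(\RR^d)$; combined with the contractibility of the cone of choices of $\ell$, this yields a well-defined continuous two-sided inverse to $h$.

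Next I would compare \eqref{eq:aff-relspace} with \eqref{eq:proj-relspace}. The inclusion $\GA(\RR^d)\hookrightarrow\PGL(\RR^d)$ as the stabilizer of the hyperplane at infinity induces a continuous surjection $p : \rsom^{\textsf{aff}}(M) \twoheadrightarrow \rsom^{\textsf{proj}}(M)$. I would prove that $p$ is a Serre fibration with contractible fibers. The fiber over $[A]$ parametrizes the admissible hyperplanes at infinity (those whose complement still yields an affine realization of $M$) modulo $\GA$, which is an open subset of the affine chart of the dual projective space $\PP^{d*}$. This subset is star-shaped around the original hyperplane at infinity---any admissible hyperplane can be linearly homotoped to the reference one by sliding it away from the finite configuration while remaining admissible---and is therefore contractible. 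The long exact sequence of homotopy groups then delivers the required equivalence.

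The principal technical obstacle is the presence of degenerate realizations at which the group actions fail to be free: configurations that do not span the ambient space, or those with nontrivial projective stabilizers. The standard workaround is to restrict to the open dense locus of full-dimensional realizations, which is a strong deformation retract of each realization space via a straightforward convexity argument, and on which the relevant quotient maps become honest principal bundles supporting the fibration arguments above.
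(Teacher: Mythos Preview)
Your argument is essentially the paper's own, just with the two equivalences threaded in a different order: you prove \eqref{eq:lin-relspace}$\,\htpy\,$\eqref{eq:aff-relspace} first and then \eqref{eq:aff-relspace}$\,\htpy\,$\eqref{eq:proj-relspace}, whereas the paper does \eqref{eq:lin-relspace}$\,\htpy\,$\eqref{eq:proj-relspace} and then \eqref{eq:proj-relspace}$\,\htpy\,$\eqref{eq:aff-relspace}. The substantive content is identical---your convex cone of positive functionals is exactly dual to the paper's choice of a hyperplane meeting every positive ray, and both of you identify the $\textsf{aff}\to\textsf{proj}$ fiber as the set of admissible hyperplanes at infinity. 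The paper is slightly more concrete here: it observes that this set is (homeomorphic to) the polar polytope of $\conv(A)$ and picks the barycenter to get a continuous section, which is cleaner than your star-shapedness-plus-long-exact-sequence route and sidesteps having to verify the Serre fibration condition. Finally, your closing caveat about configurations that fail to span is unnecessary: since $M$ has rank $d+1$, every realization spans by definition, so there is no degenerate locus to retract away from.
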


\begin{proof}[Proof (sketch)]
To see \eqref{eq:lin-relspace}$\htpy$\eqref{eq:proj-relspace}, consider the 
map $\rsom^{\textsf{lin}}(M)\to\rsom^{\textsf{proj}}(M)$
that sends a vector configuration $V$ to the intersection of its positive span
with a hyperplane that intersects every positive ray spanned by $V$. 
The map is well defined because two point configurations arising from different hyperplanes 
are related by an admissible projective transformation, 
and all linear transformations of $V$ also induce admissible projective transformations of $A$. 
The homogenization map provides a section, and the fibers are easily seen to be contractible.

For \eqref{eq:proj-relspace}$\htpy$\eqref{eq:aff-relspace}, observe that each fiber of the quotient map
$\rsom^{\textsf{aff}}(M)\to \rsom^{\textsf{proj}}(M)$ is homeomorphic to the 
set of admissible projective transformations up to affine transformation. That is,
the set of ``hyperplanes at infinity'' that do not cut $\conv(A)$. This, in turn, is homeomorphic to a polytope, the polar polytope
of $\conv(A)$, which depends continuously on $A$. Hence, a continuous section can be defined by selecting its barycenter.
\end{proof}

Similarly, we have the following lemma for inscribed poltopes:

\begin{lem}[Realization spaces of inscribed polyopes]\label{lem:eqiv_relins}
Let $P$ be an inscribed polytope in $\RR^d$.  Then the following three spaces are homotopy equivalent:
\begin{compactitem}[$\circ$]
\item The realization space of all inscribed polytopes combinatorially equivalent to it, modulo M\"obius transformations:
\begin{equation}
	\rsins(P)%
	=\bigslant{\set{A\in (\Sp{d-1})^n}{\conv(A)\simeq P}}{\Mob{\Sp{d-1}}}.
\end{equation}
\item  The realization space of all inscribed polytopes combinatorially equivalent to it, modulo orthogonal transformations:
\begin{equation}
	\rsins^{\textsf{ort}}(P)=\bigslant{\set{A\in (\Sp{d-1})^n}{\conv(A)\simeq P}}{\Ort{\RR^{d}}}.
\end{equation}
\end{compactitem}
\end{lem}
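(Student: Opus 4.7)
The plan is to exhibit the natural quotient map
\[
 \pi : \rsins^{\textsf{ort}}(P) \twoheadrightarrow \rsins(P),
\]
induced by the inclusion $\Ort{\RR^d}\hookrightarrow \Mob{\Sp{d-1}}$, as a fiber bundle with contractible fibers, and to produce a continuous section analogous to the one used in the previous lemma.

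First I would identify the homogeneous space $\Mob{\Sp{d-1}}/\Ort{\RR^d}$. Every Möbius transformation of $\Sp{d-1}$ extends uniquely to a conformal automorphism of the closed unit ball $\overline{\Bll{d}}$ (the Poincaré extension), and under this extension the orthogonal group $\Ort{\RR^d}$ is precisely the stabilizer of the origin. The map $[\mu]\mapsto \widetilde\mu(0)$ therefore yields a homeomorphism $\Mob{\Sp{d-1}}/\Ort{\RR^d}\cong \Bll{d}$, which is contractible.

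Next, since both realization spaces arise as quotients of the same set $Y_P := \set{A\in (\Sp{d-1})^n}{\conv(A)\simeq P}$ by the respective group actions, the fibers of $\pi$ are homeomorphic to $\Mob{\Sp{d-1}}/\Ort{\RR^d}\cong \Bll{d}$. To produce an explicit homotopy inverse, I would construct a continuous section of $\pi$ by selecting, in each Möbius orbit on $Y_P$, the representative (up to $\Ort{\RR^d}$) whose \emph{conformal barycenter} — the unique point $c(A)\in \Bll{d}$ where the Poisson-kernel integrals against the uniform measure on $A$ vanish, in the sense of Douady--Earle — is placed at the origin. Such a normalization exists and is unique because $\Mob{\Sp{d-1}}$ acts transitively on $\Bll{d}$ with stabilizer $\Ort{\RR^d}$, it depends continuously and $\Mob{\Sp{d-1}}$-equivariantly on $A$, and the straight-line contraction of $c(A)$ towards $0$ inside $\Bll{d}$ pulls back (via the corresponding Möbius transformations) to the desired deformation retraction of $\rsins^{\textsf{ort}}(P)$ onto the image of the section.

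The main obstacle is verifying that the conformal barycenter $c(A)$ is well defined and lies in the \emph{open} ball $\Bll{d}$ for every $A\in Y_P$: if the configuration concentrated too strongly near one point of $\Sp{d-1}$, the barycenter could a priori escape to the boundary. However, the combinatorial constraint $\conv(A)\simeq P$ forces $A$ to span a genuine $d$-polytope, which rules out such degeneracies. If one prefers to avoid this analytic input, an equivalent abstract route is to observe that the $\Mob{\Sp{d-1}}$-action on the open set $Y_P$ is proper (stabilizers are the finite combinatorial symmetries of $P$, which do not affect the homotopy type of the quotient), so $\pi$ is a genuine fiber bundle with contractible fibers over a paracompact base and therefore a homotopy equivalence.
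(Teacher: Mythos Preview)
The paper does not actually supply a proof of this lemma: it is introduced by the sentence ``Similarly, we have the following lemma for inscribed polytopes,'' deferring entirely to the proof sketch of Lemma~\ref{lem:homogenization}. Your argument is precisely the analogue that the word ``Similarly'' is pointing to: the fibers of the quotient map $\rsins^{\textsf{ort}}(P)\to\rsins(P)$ are copies of $\Mob{\Sp{d-1}}/\Ort{\RR^d}$, you identify this coset space with the open ball $\Bll{d}$ via the Poincar\'e extension, and you build a continuous section by a barycenter construction---exactly parallel to the paper's use of the barycenter of the polar polytope in the proof of Lemma~\ref{lem:homogenization}.

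Where you add genuine content is in naming the Douady--Earle conformal barycenter as the specific M\"obius-equivariant section, and in noting the non-degeneracy condition (no atom of mass $\ge 1/2$, automatically satisfied since $P$ has at least $d+1\ge 3$ distinct vertices) that keeps $c(A)$ in the open ball. This is more than the paper gives and is correct. Your alternative route via properness of the action is morally right but slightly imprecise as stated: the action is not free (finite combinatorial symmetries may fix a realization), so $\pi$ is not literally a fiber bundle; one should either pass to a suitable orbifold/stack statement or, more simply, stick with the explicit section you already built.
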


\subsection{Mn\"ev's universality theorem}\label{sec:universality}

A \defn{primary basic semi-algebraic} set is a subset of $\RR^d$ defined by integer polynomial equations and inequalities \[S=\set{\mathbf{x}\in\RR^d}{f_1(\mathbf{x})=0,\dots,f_k(\mathbf{x})=0, f_{k+1}(\mathbf{x})>0,\dots,f_r(\mathbf{x})>0},\text{  where }f_i\in\ZZ[\mathbf{x}].\]
Realization spaces of polytopes and oriented matroids are examples of primary basic semi-algebraic sets. Mn\"ev's Universality Theorem \cite{Mnev1988} is a reciprocal statement:
every primary basic semi-algebraic set appears as the realization space of some oriented matroid/polytope up to \defn{stable equivalence}, which implies homotopy equivalence 
(see~\cite{RichterGebert1998}). We refer to \cite{Mnev1988}\cite{RichterGebert95}\cite{RichterGebert1998} for its proof.

\begin{thm}[{Universality Theorem~\cite{Mnev1988}}]\label{thm:universality}
For every primary basic semi-algebraic set $S$ defined over $\ZZ$ there is a
rank~$3$ oriented matroid whose realization space is stably equivalent to~$S$. 
If moreover $S$ is open, then the oriented matroid may be chosen to be uniform.

Given any presentation of $S$, such an oriented matroid of size linear in the 
size of the presentation can be found in polynomial time.
In particular, there is such a matroid whose size is linear in the sum of the arithmetic 
complexities of the polynomials.

\end{thm}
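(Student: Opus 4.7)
The plan is to build, for a given primary basic semi-algebraic set $S$, a concrete rank~$3$ point-line configuration in the projective plane whose realization space encodes, coordinate by coordinate, the arithmetic that cuts out $S$. I would follow the approach of Mn\"ev, streamlined by Shor and Richter-Gebert: first reduce the defining polynomials to a sequence of elementary operations on auxiliary variables, then implement each operation by a small von Staudt-style gadget of collinear points, and finally check that the projection from gadget space to the $S$-coordinates is a stable equivalence.

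The first step is a normal form reduction. Since any polynomial in $\ZZ[\mathbf{x}]$ is built from iterated additions and multiplications, the system $f_1=\cdots=f_k=0,\ f_{k+1}>0,\dots,f_r>0$ can be rewritten, after introducing polynomially many new variables, as a system consisting only of relations of the form $z=x+y$, $z=x\cdot y$, $z=c$ with $c\in\{0,1\}$, together with strict inequalities $x>0$ (in the open case, the equalities can be absorbed into inequalities via a suitable squaring trick). The number of such relations is linear in the arithmetic complexity of the original polynomials, which will give the complexity bound at the end.

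The heart of the construction uses \emph{von Staudt's projective arithmetic}. I would fix three points in general position on a line $L$ to serve as the projective frame $(0,1,\infty)$. With respect to this frame there are classical finite point-line configurations that force an adjoined point $z\in L$ to satisfy $z=x+y$, respectively $z=xy$, whenever specified points $x,y\in L$ are placed; positivity is encoded by an extra incidence with a marker on the positive half-line. Concatenating one such gadget per relation of the normal form yields a rank-$3$ oriented matroid $M_S$ whose realization space, after fixing the frame, is tautologically the locus of assignments satisfying the normal form equations. Projecting out the auxiliary variables gives $S$. Each individual projection is rational with affine fibers, as is the quotient by $\PGL{\RR^2}$ (compare \lemref{homogenization}), so their composition is a stable equivalence; in particular homotopy equivalence follows, and the size of $M_S$ and the construction time are both linear in the input.

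The main obstacle is the uniform case. When $S$ is open, one wants $M_S$ to be \emph{uniform}, i.e.\ to have no unintended collinearities beyond those enforced by the gadgets. The standard remedy is to perturb each adjoined point transversely to the prescribed incidences; the delicate part is to carry out this perturbation coherently over all gadgets and to verify that it changes the combinatorics in a controlled way, namely by only adding sign data on non-gadget triples, so that the forgetful projection from the realization space of the uniform matroid to the gadget realization space remains a trivial fibration onto an open subset. Openness of $S$ is exactly what makes this perturbation survive to a nonempty open stratum, and it is the only place in the argument where this hypothesis is used.
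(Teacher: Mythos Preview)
The paper does not prove this theorem at all: it is quoted as Mn\"ev's Universality Theorem, and the text explicitly says ``We refer to \cite{Mnev1988}\cite{RichterGebert95}\cite{RichterGebert1998} for its proof.'' So there is no ``paper's own proof'' to compare against; the statement is used as a black box throughout Sections~\ref{sec:inscribed} and~\ref{sec:simpinscribed}.

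That said, your outline is a faithful summary of the standard proof found in those references. The reduction to a Shor normal form of elementary $+$, $\times$, constant, and sign relations, the implementation of each relation by a von Staudt gadget on a fixed projective line, and the observation that the forgetful map to $S$ is a stable projection are exactly the ingredients in Richter-Gebert's exposition. You also correctly locate the only genuinely delicate point: in the uniform case one must break all accidental collinearities by a controlled perturbation, and openness of $S$ is precisely what guarantees the perturbed stratum is nonempty. One small caution: the phrase ``in the open case, the equalities can be absorbed into inequalities via a suitable squaring trick'' is not quite how the argument goes---rather, when $S$ is open there are no equalities to begin with, and the issue is that the von Staudt gadgets themselves introduce collinearities that must then be perturbed away. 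But this is a matter of emphasis, not a gap.
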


The \defn{arithmetic complexity} of a polynomial $f\in\ZZ[x]$ is, roughly speaking, the minimal number of operations $+$ and $\times$ needed to compute it from $x$ and $1$, when we are allowed to reuse computations. For example, $(x + 1)^2=(x + 1)(x + 1)$ can be computed with one addition and one multiplication. See \cite{BuergisserClausenShokollahi1997}\cite{Valiant1979} for details.

The following statements are among the consequences of the Universality Theorem:
\begin{cor}\label{cor:omhard}
The {realizability problem for oriented matroids of rank~$3$} is polynomially equivalent to the \emph{existential theory of the reals (ETR)}.
\end{cor}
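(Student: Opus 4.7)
The plan is to establish both directions of the polynomial equivalence between rank~$3$ oriented matroid realizability and ETR.

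For the easy direction (realizability reduces to ETR), I would unpack the definition of the realization space: for a rank~$3$ oriented matroid $M$ on $[n]$ with chirotope $\chi^M$, non-emptiness of $\rsom(M)$ amounts to the existence of vectors $v_1,\dots,v_n\in\RR^3$ such that $\signe\det(v_{i_1},v_{i_2},v_{i_3})=\chi^M(i_1,i_2,i_3)$ for every ordered triple. This is a conjunction of $O(n^3)$ polynomial equations and strict inequalities of bounded degree with coefficients in $\{-1,0,+1\}$, so it is a primary basic semi-algebraic description, yielding an ETR instance of size polynomial in that of $M$.

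For the converse (ETR reduces to realizability), I would feed an arbitrary ETR instance---a primary basic semi-algebraic set $S$ presented by integer polynomials $f_1,\dots,f_r$---into the effective statement of Theorem~\ref{thm:universality}, producing in polynomial time a rank~$3$ oriented matroid $M$ whose realization space is stably equivalent to $S$. Since stable equivalence preserves non-emptiness (it implies homotopy equivalence, hence in particular retains the set of connected components), $S\neq\emptyset$ if and only if $\rsom(M)\neq\emptyset$, that is, if and only if $M$ is realizable. The linear size bound in the last clause of Theorem~\ref{thm:universality} ensures this reduction is genuinely polynomial-time.

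The hard part is not in this corollary itself; it is entirely subsumed by the invoked Universality Theorem, whose proof supplies both the combinatorial construction and the polynomial-time size bound in terms of arithmetic complexity. The only minor points of care on top of citing the theorem are (i) confirming that stable equivalence preserves non-emptiness, which is immediate from its definition as a chain of rational projections and semi-algebraic quotients with non-empty fibers, and (ii) observing that verifying a chirotope against sign-of-determinant conditions can be encoded in a number of constraints polynomial in $n$, which is transparent from the chirotope formalism recalled in Section~\ref{sec:preliminaries}.
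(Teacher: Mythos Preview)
Your proposal is correct and follows the same approach as the paper, which simply states Corollary~\ref{cor:omhard} as a consequence of the Universality Theorem~\ref{thm:universality} (with references to Mn\"ev and Shor) and gives no further proof. Your sketch is actually more explicit than the paper's treatment: you spell out both directions of the equivalence and note why stable equivalence preserves non-emptiness, whereas the paper leaves all of this to the cited references.
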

\begin{cor}\label{cor:omalgebraic}
For every finite field extension $F/\QQ$ of the rationals, there exists an
oriented matroid of rank~$3$ that cannot be realized with coordinates in $F$. 
\end{cor}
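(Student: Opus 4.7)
The plan is to deduce this from the Universality Theorem (Theorem~\ref{thm:universality}) by exhibiting a primary basic semi-algebraic set $S$ defined over $\ZZ$ which is non-empty but has no points with coordinates in $F$. Given such an $S$, the Universality Theorem produces a rank~$3$ oriented matroid whose realization space is stably equivalent to $S$, and we then need to invoke the fact that stable equivalence preserves the existence of $F$-rational points.

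To construct $S$, let $n = [F:\QQ]$ and choose any irreducible polynomial $f(x) \in \ZZ[x]$ of degree strictly greater than $n$ that has a real root. For instance, Eisenstein's criterion at the prime $2$ shows that $f(x)=x^{n+1}-2$ is irreducible over $\QQ$, and it has the real root $\alpha = \sqrt[n+1]{2}$. Since $[\QQ(\beta):\QQ]=n+1>n=[F:\QQ]$ for every root $\beta$ of $f$, no root of $f$ lies in $F$. Set $S=\{x\in\RR : f(x)=0\}$; this is a primary basic semi-algebraic set defined over $\ZZ$, it is non-empty, and it contains no point with coordinates in $F$.

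By Theorem~\ref{thm:universality} there is a rank~$3$ oriented matroid $M$ whose realization space $\rsom(M)$ is stably equivalent to $S$. I would then invoke the algebraic refinement of Mnëv's theorem: the elementary reductions constituting a stable equivalence are rational maps with rational (indeed polynomially-parametrized) sections, so they preserve the locus of $F$-points for every subfield $F\subseteq\RR$ simultaneously. Hence $\rsom(M)$ has a realization with coordinates in $F$ if and only if $S$ does, and by construction $S$ has no $F$-points. Consequently $M$ is realizable over $\RR$ but not over $F$, as desired.

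The main point that requires care is the last step: the topological assertion in Theorem~\ref{thm:universality} by itself is insufficient, and one must trace through the proof to verify that the reductions involved in stable equivalence are defined over $\QQ$ and transport $F$-points both ways. This is exactly the algebraic side of Mnëv universality, made explicit in \cite{Mnev1988} and \cite{RichterGebert1998}, and is what distinguishes this corollary from a purely homotopical consequence of the Universality Theorem.
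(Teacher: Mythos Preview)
Your proposal is correct and follows precisely the route the paper indicates: the paper does not give a detailed proof of this corollary, merely remarking that it ``follows at once from the Universality Theorem~\ref{thm:universality} because of stable equivalence,'' and you have carefully filled in the missing details (an explicit choice of $S$ with no $F$-points, and the observation that the stable-equivalence reductions are rational and thus preserve $F$-rationality). Your caveat in the final paragraph is well-placed and exactly identifies the non-topological content needed.
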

In other words, that ``all algebraic numbers'' are needed to coordinatize oriented matroids.

\section{Universality for inscribed polytopes and Delaunay subdivisions.}\label{sec:inscribed}

In this section, we prove:

\begin{thm}\label{thm:lawrence}
For every primary basic semi-algebraic set there is a Delaunay subdivision and 
an inscribed polytope whose realization space is homotopy equivalent to $S$.
\end{thm}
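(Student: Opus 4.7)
The plan is to reduce Theorem~\ref{thm:lawrence} to Mn\"ev's Universality Theorem via a Lawrence-style construction adapted to the inscribed setting. By Theorem~\ref{thm:universality}, for any primary basic semi-algebraic set $S$ we can find a rank-$3$ oriented matroid $M$ with $\rsom(M)\htpy S$, so it suffices to produce an inscribed polytope $P$ with $\rsins(P)\htpy\rsom(M)$; the Delaunay-subdivision counterpart is then recovered by stereographic projection from a point of $\Sp{d}$ not in $P$, which identifies spherical configurations modulo M\"obius transformations with affine configurations modulo similarities.

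First I would fix a realization of $M$ in the plane and lift it to $\Sp{2}$ by inverse stereographic projection, and then perform an inscribed analogue of the classical Lawrence extension. Recall that the usual Lawrence lift of a vector configuration $V=\{v_1,\dots,v_n\}\subset\RR^r$ is the polytope $\La(V)\subset\RR^{r+n}$ with vertex set $\{(v_i,e_i),(-v_i,e_i)\}_{i=1}^n$, and its key property is that $\rspol(\La(V))\htpy\rsom(V)$: the paired ``Lawrence pegs'' rigidify the combinatorics so that the face lattice of $\La(V)$ determines the matroid of $V$. The inscribed variant places the two vertices of each pair symmetrically on a common sphere, for example as $(\alpha v_i,\beta e_i)$ and $(-\alpha v_i,\beta e_i)$ with scalars $\alpha,\beta$ chosen to equalize their norms; these vertices already lie on a common sphere by symmetry, so that the resulting polytope is inscribed, and the face-lattice conditions force every combinatorially equivalent inscribed realization to arise from a spherical realization of $M$.

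The hardest step will be showing that the realization-space equivalence $\rsins(P)\htpy\rsom(M)$ actually survives the inscribed constraint and the M\"obius quotient. In the classical Lawrence argument, rigidity is derived purely from the linear face combinatorics of $\La(V)$, and it is not immediate that an inscribed deformation of $P$ projects to a genuine deformation of the realization of $M$; one has to check that the antipodal and matching relations encoded in the face lattice of the inscribed Lawrence polytope continue to single out the correct parameterization, and in particular that the inscribed constraint does not shrink the fiber over a given realization of $M$ to something larger or smaller than the group of ``gauge'' transformations of the Lawrence pegs. A secondary point is that $\rsom(M)$ is taken modulo $\GL{\RR^3}$ while $\rsins(P)$ is taken modulo $\Mob{\Sp{d-1}}$; using the fibration argument from the proof of Lemma~\ref{lem:eqiv_relins}, one identifies the M\"obius quotient on the sphere with the projective quotient, and the fibers parameterizing the choice of a ``point at infinity'' for the stereographic map are contractible. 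Putting the pieces together yields $\rsins(P)\htpy\rsom(M)\htpy S$, and the Delaunay subdivision case follows immediately by stereographic projection.
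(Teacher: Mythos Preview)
Your overall strategy---reduce to Mn\"ev and then pass through a Lawrence construction---is the paper's strategy as well. The gap is that you correctly identify the crucial step (``The hardest step will be showing that the realization-space equivalence $\rsins(P)\htpy\rsom(M)$ actually survives the inscribed constraint'') but do not actually carry it out, and your proposed all-at-once inscription does not obviously lend itself to such an argument.

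The paper resolves this via Lemma~\ref{lem:insLawrence}, and the mechanism is rather different from your symmetric inscription $(\pm\alpha v_i,\beta e_i)$. The Lawrence extension is performed \emph{one element at a time}: at each step a single point $a$ lying outside the current ball is replaced by two points $\uli a,\oli a$ on the sphere in one higher dimension, while the already-inscribed points remain on the equatorial subsphere. The invariant tracked through the induction is the ``partially inscribed'' realization space $\rsomins(A,B)$, and the inductive claim is
\[
\rsomins(A,B)\ \htpy\ \rsomins\bigl(\Lw(A,a),\,B\cup\{\uli a,\oli a\}\bigr).
\]
This is proved by exhibiting an explicit projection (intersect the ray through $\oli a,\uli a$ with the hyperplane $H$ spanned by the remaining points, then rescale so that $H\cap\Sp{d-1}$ becomes $\Sp{d-2}$) and checking that its fibers are contractible: once a side of $H$ is fixed, the fiber is parameterized by the center of the sphere together with the admissible positions of $\oli a$, the latter being a nonempty spherically convex region. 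A continuous section is obtained by sending $\oli a$ to the barycenter of that region. This is exactly the ``fiber-gauge'' analysis you flagged as needing to be done; the point is that the one-step decomposition makes the fiber transparent.

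Your construction, by contrast, only exhibits \emph{one} inscribed realization (and already requires normalizing all $v_i$ to a common norm for the points to lie on a sphere). Rigidity of the Lawrence polytope gives you the inclusion $\rsins(P)\hookrightarrow\rspol(P)=\rsom(\Lw(V))\htpy\rsom(V)$, but you still owe the argument that this inclusion has contractible fibers, and it is not clear how to see this globally without decomposing the lift into single-element steps as the paper does.

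A minor point on the Delaunay half: the paper does not use stereographic projection here. It simply takes the subdivision $T$ consisting of the single cell $\Lw(A)$, for which $\rsdel(T)\htpy\rsins(\Lw(A))$ is immediate.
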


To pass from 
realization spaces of oriented matroids to those of polytopes, we 
use (as in \cite{Mnev1988}) Lawrence extensions. The resulting 
polytopes are always inscribable, as observed in \cite{AdiprasitoZiegler2014}.

\subsection{Lawrence polytopes}
We recall some properties of polytopes constructed from Lawrence extensions.
\begin{dfn}[cf. \cite{RichterGebert1998}]
Let $A$ be a $d$-dimensional point configuration and let $a\in A$. The 
\Defn{Lawrence extension} of $A$ on $a$ is the $(d+1)$-dimensional point 
configuration 
\[\Lw(A,a):=(A\setminus a)\cup \oli a \cup \uli a,\]
where $A$ is embedded in the hyperplane $x_{d+1}=0$ and the new points are $\uli 
a:=(a,1)$ and $\ol a:=(a,2)$.
Let $B\subseteq A$, the \Defn{Lawrence extension} $\Lw(A,B)$ is the point 
configuration obtained by Lawrence lifting the points of $B$ one by one
\[\Lw(A,B):=\Lw(\Lw(\dots\Lw(\Lw(A,b_1),b_2)\dots b_{k-1}),b_k),\]
where $B=\{b_1,\dots,b_k\}$.

The \Defn{Lawrence polytope} of a point configuration $A$ is the polytope 
$\Lw(A)=\conv(\Lw(A,A))$.
\end{dfn}

\begin{lem}[{{cf. \cite[Theorems 6.26 and 
6.27]{Ziegler1995}}}]\label{lem:Lawrencerigid}
For any point configuration $A$, $\Lw(A,A)$ is in convex position and the 
Lawrence polytope $\conv(\Lw(A,A))$ is rigid.
\end{lem}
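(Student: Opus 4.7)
The plan is to first unfold the iterated construction into an explicit form, then argue convex position via direct supporting functionals, and finally argue rigidity via the classical dictionary between the face lattice of a Lawrence polytope and the oriented matroid of the underlying configuration. After $n$ single-point lifts, $\Lw(A,A)$ can be written explicitly as the configuration of $2n$ points in $\RR^{d+n}$ given by $\uli{a}_j = (a_j, e_j)$ and $\ol{a}_j = (a_j, 2e_j)$ for $j = 1,\dots,n$, where $e_j$ is the $j$-th standard basis vector of the new $\RR^n$ factor. The salient feature of this closed form is that the position of $a_j \in \RR^d$ sits only in the first $d$ coordinates and plays no role in what follows---this is what allows the statement to hold for arbitrary $A$.

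For convex position I would exhibit, for each Lawrence point, a linear functional that maximizes uniquely at it, using only the new coordinates $y_1,\dots,y_n$. For $\uli{a}_j$ the functional
\[
\ell_j^-(x,y) \;=\; -y_j \,-\, 3\sum_{i \neq j} y_i
\]
takes the value $-1$ at $\uli{a}_j$, $-2$ at $\ol{a}_j$, $-3$ at every $\uli{a}_i$ with $i \neq j$, and $-6$ at every $\ol{a}_i$ with $i \neq j$; hence $\uli{a}_j$ is the unique maximizer and so is a vertex. The mirror functional $\ell_j^+(x,y) = y_j - 3\sum_{i \neq j} y_i$ isolates $\ol{a}_j$. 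Since every point of $\Lw(A,A)$ is extreme, the configuration is in convex position.

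For rigidity I would use the two-step principle that (i) the face lattice of $\conv(\Lw(A,A))$ determines the oriented matroid of $A$, and (ii) the oriented matroid of $A$ determines the oriented matroid of $\Lw(A,A)$. Step (ii) is essentially by construction: the oriented matroid of the Lawrence polytope is the \emph{Lawrence lift} of that of $A$, a canonical functorial operation. Step (i) rests on the classical bijection between facets of $\Lw(A,A)$ and topes (maximal covectors) of the oriented matroid of $A$: a supporting functional of a facet, written as $\ell(x,y) = c\cdot x + k\cdot y$, selects $\ol{a}_j$ or $\uli{a}_j$ according to the sign of $k_j$, and the sign patterns that actually arise on facets are precisely the topes of the oriented matroid of $A$, from which the whole oriented matroid is recovered. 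The main obstacle is making this facet-to-tope dictionary fully rigorous---in particular verifying that every tope is realized by some facet, and handling non-uniform configurations correctly---and for these classical details I would defer to~\cite[Theorems 6.26 and 6.27]{Ziegler1995}.
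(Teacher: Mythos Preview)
The paper does not actually prove this lemma; it simply cites \cite[Theorems~6.26 and~6.27]{Ziegler1995} and moves on. Your sketch therefore supplies strictly more than the paper does.

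Your closed form for $\Lw(A,A)$ and the convex-position argument via the functionals $\ell_j^{\pm}$ are both correct and clean.

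One detail in the rigidity sketch is off: the facets of the Lawrence polytope encode the \emph{cocircuits} of $A$, not its topes. Via Gale duality, the complement of a facet of $\Lw(A)$ is a positive circuit of the centrally symmetric Gale transform $\{g_j,-g_j\}$, which is the same thing as a signed circuit of $\{g_j\}$, i.e., a cocircuit of~$A$. Topes have full support, whereas a typical facet of the Lawrence polytope contains \emph{both} $\uli{a}_j$ and $\ol{a}_j$ for many $j$ (those are the zero entries of the corresponding covector). This does not damage your plan: cocircuits determine the oriented matroid just as well, and you already defer to Ziegler for the details. Just replace ``topes'' by ``cocircuits'' in your write-up.
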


\subsection{Partially inscribed point configurations}

Given an oriented matroid $M$ and a subset of its elements $E$, we consider the 
set of realizations of $M$ such that the 
points of $E$ lie on the boundary of the unit ball $\Bll{d}$ and all the remaining points are outside. 
We use the homogenized version and consider 
such realizations up to orthogonal transformations fixing the hyperplane $x_{d+1}=0$.
\begin{align*}
\rsomins(M,E)=
\big\{V\in \RR^{(d+1)\times n}\}\,\big|\,& V \text{ realizes } M \text{, } \forall e\in E,\, A_e\in \partial \pos(\homog{\Bll{d}})  \\
&\bigslant{\text{ and }\forall e\notin E,\, 
A_e\notin \pos(\homog{\Bll{d}}) \big\}}{\Ort{\RR^d}}.
\end{align*}
Notice that with this definition we are implicitly allowing points at infinity, and negative points, 
when we consider vectors that span rays not 
intersecting the homogenizing hyperplane.

The following lemma expands \cite[Proposition~A.5.8]{AdiprasitoZiegler2014} to 
make a statement about realization spaces.
\begin{lem}\label{lem:insLawrence}
For every planar point configuration $A$, $\rsom(A)\htpy \rsins(\Lw(A))$.
\end{lem}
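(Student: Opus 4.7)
The plan is to build explicit continuous maps between $\rsom(A)$ and $\rsins(\Lw(A))$ and show that they are mutual homotopy inverses. The first step is to collapse the difference between the polytopal and the oriented-matroid realization space on the right-hand side: by Lemma~\ref{lem:Lawrencerigid} the Lawrence polytope $\Lw(A)$ is rigid, so $\rsins(\Lw(A))$ coincides with the space of inscribed realizations of the oriented matroid $\chi^{\Lw(A,A)}$ modulo $\Mob{\Sp{2}}$. Using Lemma~\ref{lem:homogenization}, I would simultaneously replace $\rsom(A)$ by its affine incarnation, so that both sides become quotients of affine configurations by six-dimensional transformation groups of the same dimension.

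Next I would write down the map $\varphi\colon\rsom(A)\to\rsins(\Lw(A))$ as the canonical \emph{vertical lift} from \cite{AdiprasitoZiegler2014}. Choose an affine representative scaled to lie inside the open unit disk of the equatorial plane $\{x_3=0\}\subset\RR^3$, and lift each $a_i$ to the pair $\underline{a}_i^{\ast}=(a_i,+\sqrt{1-|a_i|^2})$ and $\overline{a}_i^{\ast}=(a_i,-\sqrt{1-|a_i|^2})$ on $\Sp{2}$. Orthogonal projection onto the equator recovers $A$ and hence its oriented matroid, and by rigidity this already certifies that the convex hull has the combinatorial type $\Lw(A)$. For the inverse $\psi\colon\rsins(\Lw(A))\to\rsom(A)$, I would pick, inside each Möbius equivalence class, a representative whose pencil of Lawrence chords is vertical (by transporting the pair of endpoints of one distinguished chord to $\pm e_3$), and then orthogonally project the resulting configuration back to the equator.

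The identity $\psi\circ\varphi=\mathrm{id}$ will be immediate from the definitions. For $\varphi\circ\psi\htpy\mathrm{id}$, I would deform each inscribed realization within its Möbius class by symmetrizing the two endpoints of every Lawrence chord about the equator, continuously sliding each pair until it sits at heights $\pm\sqrt{1-|a_i|^2}$; the sliding parameters live in a product of open intervals, hence in a convex, contractible space, and the combinatorial type is preserved throughout the homotopy thanks to the rigidity reduction in the first step.

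The main obstacle will be making the Möbius normalization underlying $\psi$ globally continuous over $\rsins(\Lw(A))$: a naive ``canonical chord'' choice may degenerate on positive-codimension loci. Rather than attempting to write down a global section, I expect the cleanest route is to study the fibers of the ``chord-data'' map, verify that these fibers are contractible (using the open-interval parametrization above together with a contractible subgroup of $\Mob{\Sp{2}}$ stabilizing the vertical pencil modulo orthogonal symmetries of the equator), and conclude via a standard fibration-type argument that $\varphi$ and $\psi$ induce a homotopy equivalence on the total spaces.
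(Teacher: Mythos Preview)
Your proposal rests on a dimensional misconception. The Lawrence polytope $\Lw(A)=\conv(\Lw(A,A))$ of an $n$-point planar configuration is obtained by performing $n$ successive Lawrence extensions, each of which raises the ambient dimension by one; hence $\Lw(A)$ is an $(n+2)$-dimensional polytope with $2n$ vertices, and its inscribed realizations live on $\Sp{n+1}$, not on $\Sp{2}$. The ``vertical lift'' you describe---sending each $a_i$ in the unit disk to the pair $(a_i,\pm\sqrt{1-|a_i|^2})\in\Sp{2}$---produces a $3$-dimensional inscribed polytope that is \emph{not} combinatorially equivalent to $\Lw(A)$ (except when $n=1$). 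Consequently, every subsequent ingredient of your plan---the M\"obius normalization in $\Mob{\Sp{2}}$, the single ``equator'', the common vertical pencil of Lawrence chords, the six-dimensional group comparison---is set up in the wrong space and does not touch $\rsins(\Lw(A))$ at all.

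The paper avoids this by arguing inductively, one Lawrence extension at a time. It introduces an auxiliary ``partially inscribed'' realization space $\rsomins(A,B)$ (realizations of the oriented matroid with the points of $B$ on the sphere and the others strictly outside) and shows that a single extension induces $\rsomins(A,B)\htpy\rsomins(\Lw(A,a),B\cup\{\underline a,\overline a\})$: the projection recovers $a$ as the intersection of the line through $\overline a,\underline a$ with the hyperplane $H$ spanned by the remaining points (followed by a rescaling making $H\cap\Sp{d-1}$ an equator), and the fiber is parameterized by the center of the sphere together with a spherically convex set of admissible positions for $\overline a$, hence contractible. Iterating over all points of $A$ yields the lemma. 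If you want to rescue your approach, you would have to replace the single vertical direction by $n$ independent lifting directions, one per point of $A$; carrying that out essentially forces you back to the paper's inductive argument.
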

\begin{proof}
Since Lawrence polytopes are rigid, and using Lemma~\ref{lem:eqiv_relins}, we have that $\rsins(\Lw(A))\htpy \rsomins(\Lw(A),\Lw(A))$. 
Therefore, we just need to prove that 
\begin{equation}\label{eq:rsom-rsomins}
	\rsom(A)\htpy \rsomins(\Lw(A),\Lw(A))
\end{equation}
We prove first that for every subset $B\subseteq A\subset 
\RR^d$ and for every $a\in A\setminus B$, 
\begin{equation}\label{eq:risomins-risominsL}
	\rsomins(A,B)\htpy \rsomins(\Lw(A,a),B\cup \{\uli{a},\oli{a}\}).
\end{equation}
For every realization of $\Lw(A,a)$ one can recover a realization of~$A$ by 
intersecting the ray emanating at $\oli{a}$ through $\uli{a}$ with the linear 
hyperplane~$H$ spanned by the remaining points.  For the moment, 
assume~$H$ is an equator of the unit sphere $\Sp{d-1}$.  In 
this case, it is clear we recover a realization of~$A$ with all the points of~$B$
on $\Sp{d-2}$ and all the remaining points outside $\Bll{d-1}$.
In general, $H$ will not be an equator, but then there is a unique rescaling
that sends $H\cap \Sp{d-1}$ to $\Sp{d-2}$.
This map is a well-defined projection 
\begin{equation}\label{eq:lawerenceproj}
	\rsomins(\Lw(A,a),B\cup \{\oli a, \uli a\})\longrightarrow \rsomins(A,B)
\end{equation}
because every orthogonal transformation of $\RR^d$ induces an orthogonal transformation on $H$.  

What's left is to establish that the fibers of this continuous map are non-empty and 
contractible.
First, by reflection symmetry we may assume that $\uli{a}$ and $\oli{a}$
are in the positive half-space defined by~$H$.  With this, we then see that a point
in the fiber is parameterized by the center of the sphere and the location of~$\oli{a}$,
This is the product of a line and a
non-empty (spherically) convex subset of the sphere (points in the upper spherical cap 
visible from $a$), and so contractible. Hence, \eqref{eq:risomins-risominsL}
is established. Finally, we construct a continuous inverse by selecting~$\oli{a}$ to be the barycenter
of the possible locations; since the fibers behave Hausdorff continuous on the pair, we are done.

To get to \eqref{eq:rsom-rsomins}, we will prove that 
$\rsom(A)\sim \rsomins(\Lw(A,a),\{\uli{a},\oli{a}\})$ for any $a\in A$ (and then we 
only need to apply \eqref{eq:risomins-risominsL} to the remainaing points).
Here the fibers of the projection are the set of choices for the 
sphere (the spheres touching the upper half-space not containing any 
point of $A$) product with the choices for $\oli a$ (again, a convex set).
We can factor the projection map through the quotient $\GL{\RR^d}/\Ort{\RR^d}$.
\end{proof}

\subsection{Topological universality}
Now Theorem~\ref{thm:lawrence} follows directly from the combination of the 
Universality Theorem~\ref{thm:universality} with Lemma~\ref{lem:insLawrence}.
\begin{proof}[Proof of Theorem~\ref{thm:lawrence}]
 By the Universality Theorem~\ref{thm:universality}, for every primary basic 
 semi-algebraic set $S$ there is a point configuration whose realization space $\rsom(A)$
 is homotopy equivalent to~$S$. Now, by Lemma~\ref{lem:insLawrence}, $\rsom(A)\htpy \rsins(\Lw(A))$.
 Finally, if we consider the Delaunay subdivision $T$ consisting of a single cell combinatorially equivalent to~$\Lw(A)$,
 one can easily see that $\rsdel(T)\htpy \rsins(\Lw(A))$.
\end{proof}

\subsection{Algebraic universality}
Corollary~\ref{cor:omalgebraic} 
follows at once from the Universality Theorem~\ref{thm:universality} because of stable equivalence.
Although the exact notion of stable equivalence does not hold in our
situation, the statement analogous to Corollary~\ref{cor:omalgebraic}
does.

\begin{cor}\label{cor:lawrenceAlg}
For every finite field extension $F/\QQ$ of the rationals, there is a realizable
Delaunay subdivision (equivalently, an inscribed polytope) that cannot be realized with coordinates in $F$. 
\end{cor}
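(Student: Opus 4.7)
The plan is to combine the algebraic universality of Corollary~\ref{cor:omalgebraic} with the Lawrence construction underlying Lemma~\ref{lem:insLawrence}, checking explicitly that the projection realizing that homotopy equivalence is $F$-rational, so that it preserves realizability over a subfield, not just up to homotopy.

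\textbf{Choice of object.} Given a finite extension $F/\QQ$, pick by Corollary~\ref{cor:omalgebraic} a rank-$3$ oriented matroid $M$ with no realization over $F$. Fix any planar realization $A \subset \RR^2$ of $M$ and set $P := \Lw(A)$, the Lawrence polytope. By Lemma~\ref{lem:insLawrence} the space $\rsins(P)$ is nonempty, so $P$ is an inscribable polytope, and the Delaunay subdivision $T$ consisting of a single cell combinatorially equivalent to $P$ is realizable as well. I claim that neither $P$ nor $T$ can be realized with coordinates in $F$.

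\textbf{Core step: $F$-rational contraction.} Suppose $R = \{x_v\}$ were an inscribed realization of $P$ with every $x_v \in F^d$. Since $P$ is rigid (Lemma~\ref{lem:Lawrencerigid}), $R$ realizes the oriented matroid of $\Lw(A,A)$. Iteratively contract Lawrence pairs: for each pair $(\oli a, \uli a)$, let $H$ be the linear hyperplane spanned by the other vertices of $R$. Because this span is generated by $F$-vectors, $H$ admits defining linear functionals with coefficients in $F$; the parameter $t$ solving $x_{\oli a} + t(x_{\uli a} - x_{\oli a}) \in H$ is the root of a single linear equation over $F$, hence lies in $F$, and the resulting projected point $y_a$ has $F$-coordinates. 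Iterating this over all $n$ pairs produces a two-dimensional configuration $A' \subset F^2$ (using any $F$-basis of the final $2$-plane, e.g., two of the already-projected points themselves). By the defining property of the Lawrence construction — contracting each Lawrence pair recovers the original chirotope, which is exactly what the projection \eqref{eq:lawerenceproj} of Lemma~\ref{lem:insLawrence} does on the level of realizations — the configuration $A'$ realizes the oriented matroid of $A$, namely $M$. This contradicts the choice of $M$.

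\textbf{Transfer to Delaunay subdivisions.} Stereographic projection and its inverse are rational maps defined over $\QQ$, and the quotient by $\Sim{\RR^d}$ absorbs any rescaling of the sphere. Consequently a Delaunay realization of $T$ with coordinates in $F$ lifts to an inscribed realization of $P$ with coordinates in $F$, and vice versa, so the previous contradiction transfers.

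\textbf{Main obstacle.} The delicate step is verifying that the iterated $F$-rational projection yields a configuration realizing precisely $M$ rather than some weaker structure. This amounts to unpacking, in the rigid oriented matroid of $\Lw(A,A)$, the standard fact that contraction of each Lawrence pair reproduces the underlying chirotope of $A$ (cf.\ \cite[Section~6.6]{Ziegler1995}). All other ingredients — rigidity, the projection formulas, and rationality of stereographic projection — are either proven already in the excerpt or are elementary.
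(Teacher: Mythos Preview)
Your proof is correct and follows essentially the same approach as the paper: use Corollary~\ref{cor:omalgebraic} to pick a planar configuration $A$ not realizable over $F$, pass to the inscribable Lawrence polytope $\Lw(A)$, and argue that any inscribed realization projects back (via iterated line--hyperplane intersections, which are $F$-rational) to a realization of the oriented matroid of $A$. You spell out the $F$-rationality of the contraction more carefully than the paper does, which is helpful. One minor remark: for the Delaunay transfer you do not actually need stereographic projection---a realization of the single-cell Delaunay subdivision already has convex hull combinatorially equal to $\Lw(A)$, so by rigidity you can run the same Lawrence contraction directly on it; this sidesteps the issue that normalizing a cospherical $F$-configuration to the \emph{unit} sphere may require a square root not in $F$.
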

\begin{proof}
 By Corollary~\ref{cor:omalgebraic}, for every algebraic extension $F$ of the 
rational numbers, there is a point configuration $A$ that cannot be 
coordinatized in $F$. Now, by Lemma~\ref{lem:insLawrence}, the Lawrence polytope 
$\Lw(A)$ is inscribable. Any inscribed realization of $\Lw(A)$ encodes a 
realization of $A$, which can be obtained through a series of radial projections
(see the proof of Lemma~\ref{lem:insLawrence}). 
Hence, if $\Lw(A)$ had a realization with coordinates in $F$, so would~$A$.
\end{proof}

\section{Universality for inscribed simplicial polytopes and Delaunay triangulations}
\label{sec:simpinscribed}
To obtain universality results for simplicial polytopes and triangulations, we 
cannot use Lawrence extensions, which produce configurations with a lot of 
non-simplicial faces. Instead, we will use \emph{neighborly polytopes}, which are also rigid.
This is possible, by a result of Kortenkamp, which implies that we can embed the oriented 
matroids of a planar point configuraitons inside the oriented matroid of a neighborly polytope.

\subsection{Stereographic projections}
The \defn{stereographic projection} $\sterproj:\Sp{d}\setminus \NP\subset 
\RR^{d+1}\rightarrow \RR^{d}$ is the map defined by 
\[\sterproj(x_1,\dots,x_{d+1})= 
\left(\frac{x_1}{1-x_{d+1}},\dots,\frac{x_d}{1-x_{d+1}}\right),\]
where $\NP$ is the north pole of the unit sphere $\Sp{d}$. 

The sterographic projection and its inverse are classical tools to translate 
from Delaunay triangulations to inscribed polytopes, and vice 
versa~\cite{Brown1979}.
The following lemma explains how to relate realizations of the Delaunay triangulations
and inscribed realizations of  polytopes.

\begin{lem}\label{lem:stereographic}
$A=\{a_1,\dots,a_{n}\}$ be a configuration of $n$ points in $\RR^d$, and let $\mathring{A}=\{\mathring a_1,\dots,\mathring a_n\}$ be its image under the inverse stereographic projection, $\mathring{A}=\sterproj^{-1}(A)$. 

Then 
\begin{enumerate}[(i)]
 \item\label{it:spspheres} $\mathring a_i $ is above (resp. on, below) the hyperplane spanned by $\{\mathring a_{j_1},\dots, \mathring a_{j_{d+1}}\}$ if and only if $a_i$ is outside (resp. on, inside) the circumsphere spanned by $\{ a_{j_1},\dots, a_{j_{d+1}}\}$; and
 \item\label{it:sphyperplanes} for every hyperplane $H\subset \RR^d$, there is a hyperplane $\mathring H\subset \RR^{d+1}$  with $\NP\in\mathring H$ such that $\mathring a_i$ in $\mathring H$ (resp. $\mathring H^\pm$) if and only if $a_i$ in $H$ (resp. $H^\pm$).
\end{enumerate}
\end{lem}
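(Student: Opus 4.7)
The plan is to reduce both parts to a single coordinate identity, then track signs. Using the explicit inverse of $\sterproj$, which sends $a\in\RR^d$ to
\[
\mathring{a} \;=\; \left(\frac{2a}{|a|^2+1},\;\frac{|a|^2-1}{|a|^2+1}\right)\in\Sp{d}\subset\RR^{d+1},
\]
I would evaluate an arbitrary affine functional $L(y)=\alpha\cdot y -\beta$ on $\RR^{d+1}$, with $\alpha=(\alpha',\alpha_{d+1})$, at $\mathring{a}$ and obtain (after clearing the positive denominator $|a|^2+1$) the numerator
\[
N_L(a)\;=\;(\alpha_{d+1}-\beta)\,|a|^2\;+\;2\alpha'\cdot a\;-\;(\alpha_{d+1}+\beta).
\]
Note that $\alpha_{d+1}-\beta = L(\NP)$. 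So $L(\mathring a)$ and $N_L(a)$ always share the same sign.

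For part (i), let $L$ be an affine functional vanishing on the hyperplane spanned by $\mathring a_{j_1},\dots,\mathring a_{j_{d+1}}$. When these points span a circumsphere (the generic case, and the only case of interest), the hyperplane does not contain $\NP$, so $L(\NP)\neq 0$, and $N_L(a)$ is a genuine quadratic that completes to $(\alpha_{d+1}-\beta)\bigl(|a-c|^2-r^2\bigr)$ for explicit $c$ and $r^2$. Since $N_L$ vanishes at $a_{j_1},\dots,a_{j_{d+1}}$, this sphere is the circumsphere of these points in $\RR^d$. The formula then reads
\[
L(\mathring a_i)\;=\;\frac{L(\NP)\,\bigl(|a_i-c|^2-r^2\bigr)}{|a_i|^2+1},
\]
so that, after fixing the orientation of $L$ so that $\NP$ is \emph{above}, being above/on/below matches exactly being outside/on/inside the circumsphere.

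For part (ii), given $H=\{v\cdot a=\gamma\}$ in $\RR^d$, define $\mathring L(y):=v\cdot(y_1,\dots,y_d)+\gamma(y_{d+1}-1)$, i.e., $\alpha=(v,\gamma)$ and $\beta=\gamma$, so $\alpha_{d+1}=\beta$ and $\mathring L(\NP)=0$. In this degenerate case $N_{\mathring L}(a)=2(v\cdot a-\gamma)$, which has the same sign as $v\cdot a-\gamma$. Hence $\mathring H:=\{\mathring L=0\}$ contains the north pole, and $\mathring a_i$ lies in $\mathring H$ (resp.\ in either open half-space $\mathring H^\pm$) precisely when $a_i$ lies in $H$ (resp.\ in $H^\pm$), with orientations preserved.

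The only subtle point is the sign bookkeeping in (i): one must verify that the sign of $L(\NP)$ really converts the ``side of the hyperplane in $\RR^{d+1}$'' into the ``side of the circumsphere in $\RR^d$'' consistently, and that the natural convention is to call ``above'' the half-space containing $\NP$, which is the side that corresponds to the unbounded component (``outside'') of the circumsphere. All other verifications are routine algebra from the displayed formula for $N_L$.
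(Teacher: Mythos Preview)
Your argument is correct. The single identity
\[
L(\mathring a)\;=\;\frac{(\alpha_{d+1}-\beta)\,|a|^2+2\alpha'\cdot a-(\alpha_{d+1}+\beta)}{|a|^2+1}
\]
with $L(\NP)=\alpha_{d+1}-\beta$ does all the work, and your case split on whether $L(\NP)$ vanishes cleanly separates parts~(i) and~(ii). The sign bookkeeping you flag is indeed the only thing to watch, and your resolution (orient $L$ so that $\NP$ is on the positive side, which then corresponds to the unbounded ``outside'' of the sphere) is the natural one.

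As for comparison: the paper does not actually supply a proof of this lemma. It is stated as a classical fact (with a reference to Brown~\cite{Brown1979} for the stereographic-projection dictionary between inscribed polytopes and Delaunay subdivisions) and then used as a black box in the proof of Lemma~\ref{lem:stackedinscribableisdelaunay} and Corollary~\ref{cor:stereographicispositive}. So your write-up fills in a detail the authors chose to omit; there is nothing to contrast against.
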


\begin{lem}\label{lem:stackedinscribableisdelaunay}
Let $T$ be a $d$-dimensional polytopal subdivision with $n$ vertices whose
boundary is a $d$-simplex $P$,
and let $Q$ be polytopal complex (homeomorphic to a sphere) obtained by 
adding to $T$ the cones with apex $a_{n+1}$ over the faces of $P$.
Then the stereographic projection from $a_{n+1}$ induces a homeomorphism
\[\rsins(Q)\cong \rsdel(T).\]
\end{lem}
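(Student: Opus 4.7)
The plan is to construct the homeomorphism explicitly from the stereographic projection $\sterproj$ centered at $a_{n+1}$: Möbius symmetry lets me normalize $a_{n+1}$ to the north pole $\NP$, and then $\sterproj$ maps the remaining vertices to a point configuration in $\RR^d$ whose Delaunay subdivision turns out to be $T$.

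First I would pass to a normalized form. Because $\Mob{\Sp{d}}$ acts transitively on $\Sp{d}$, every class in $\rsins(Q)$ has a representative with $a_{n+1}=\NP$, and such representatives are unique up to the stabilizer of $\NP$. This stabilizer is classically identified, via conjugation with $\sterproj$, with $\Sim{\RR^d}$. Therefore $\rsins(Q)$ is canonically homeomorphic to the space of tuples $(a_1,\ldots,a_n)\in(\Sp{d}\setminus\{\NP\})^n$ such that $\conv(\{a_1,\ldots,a_n,\NP\})\simeq Q$, modulo the action of $\Sim{\RR^d}$ induced through $\sterproj$.

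Next I would apply $\sterproj$ coordinatewise to obtain $A=(\sterproj(a_1),\ldots,\sterproj(a_n))\in(\RR^d)^n$ and verify $\Del(A)\simeq T$. The facets of $Q$ split into two families. Those containing $\NP$ are the cones with apex $a_{n+1}$ over the facets of $P=\partial T$; by Lemma~\ref{lem:stereographic}(\ref{it:sphyperplanes}) they correspond to the affine hyperplanes in $\RR^d$ supporting $\conv(A)$ along the facets of $P$. A facet $F$ not containing $\NP$ has affine hull $H$ with every other vertex of $Q$ (including $\NP$) on one fixed side; Lemma~\ref{lem:stereographic}(\ref{it:spspheres}) translates this into every $\sterproj(a_j)$ with $j\notin F$ lying \emph{outside} the circumsphere through $\{\sterproj(a_i)\st i\in F\}$. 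This is exactly the empty-circumsphere condition, so these facets correspond bijectively to the cells of $\Del(A)$, which in turn match the cells of $T$ by construction.

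The inverse map sends a representative $(a_1,\ldots,a_n)$ of a class in $\rsdel(T)$ to the class of $(\sterproj^{-1}(a_1),\ldots,\sterproj^{-1}(a_n),\NP)$ in $\rsins(Q)$. Both maps are built from $\sterproj^{\pm 1}$ together with the quotient identifications above, so they are continuous and mutually inverse. The delicate part will be the combinatorial bookkeeping---ensuring that every facet of $Q$ falls into exactly one of the two families, and that the ``outside'' side of each circumsphere in $\RR^d$ matches the ``$\NP$-side'' of the corresponding supporting hyperplane of $Q$. This hinges on the geometric content of Lemma~\ref{lem:stereographic} and is the crux of the identification.
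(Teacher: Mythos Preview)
Your proposal is correct and follows essentially the same approach as the paper's proof: normalize $a_{n+1}$ to $\NP$ via a M\"obius transformation, use the two parts of Lemma~\ref{lem:stereographic} to match the facets of $Q$ through $\NP$ with the boundary facets of $\conv(A)$ and the facets of $Q$ avoiding $\NP$ with the Delaunay cells of $A$, and identify the stabilizer of $\NP$ in $\Mob{\Sp{d}}$ with $\Sim{\RR^d}$ to make the map descend to the quotients. Your write-up is in fact a bit more explicit than the paper's about the group-theoretic identification and the two-way bijection, but the substance is the same.
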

\begin{proof}
Let $\mr{A}=\{\mr a_1,\dots,\mr a_n,\mr a_{n+1}\} \subset \Sp{d}$ be an 
inscribed realization of $Q$. By a M\"obius transformation, we can assume that 
the last point lies at the north pole, 
$\mr a_{n+1}=\NP$. Now, by Lemma~\ref{lem:stereographic}, the Delaunay 
subdivision of the stereographic projection of the points $\mr a_ i$, 
$1\leq i\leq n$, coincides with $T$. Indeed, if $\mr S\subset \mr A$ is the set 
of vertices of a facet $F$ of $\conv(\mr A\cup \NP)$ that does not contain $\NP$,
then $\mr S$ spans a supporting hyperplane that has all the remaining points above it 
(at the same side as $\NP$). According to Lemma~\ref{lem:stereographic}\eqref{it:spspheres}, its 
stereographic projection $S=\sterproj(\mr S)$ spans an empty circumsphere,
and hence is the set of vertices of a cell of the Delaunay subdivision of $A$.
Additionaly, by Lemma~\ref{lem:stereographic}\eqref{it:sphyperplanes}
facets of $\conv(\mr A\cup \NP)$ that contain $\NP$ are in bijection 
with facets of $\conv(A)$, which by hypothesis is a simplex in any realization of $T$.

Moreover, every M\"obius transformation of $\Sp{d}$ that fixes the north pole 
induces a similarity of $\RR^d$. To conclude the proof,
observe that every realization of $T$ as Delaunay triangulation can be lifted 
with the inverse stereographic projection to a unique inscribed realization of $Q$.
\end{proof}

\begin{rmk}\label{rmk:fixedboundary}
 Notice that the exactly the same proof shows a bijection between realization spaces
 of inscribed polytopes and realization spaces of Delaunay subdivisions 
 \emph{with prescribed boundary}. Indeed, Lemma~\ref{lem:stereographic}\eqref{it:sphyperplanes}
 implies that the vertex figure of $\NP$ is combinatorially equivalent to
 the convex hull of the Delaunay triangulation. Since a general triangulation (as a simplicial complex)
 does not prescribe the convex hull of the realization, we have to focus only in 
 those whose convex hull is a simplex.
\end{rmk}

\subsection{Lexicographic liftings}
A central tool for our construction are lexicographic liftings, which are a way to derive $(d+1)$-dimensional point configurations from $d$-dimensional point configurations. 

\begin{dfn}\label{def:lexicographiclifting}
A \defn{lexicographic lifting} of a point configuration $A=\{a_1,\dots,a_n\}\subset\RR^d$ (with respect to the order induced by the labels) with a sign vector $(s_1,\dots,s_n)\in\{+,-\}^{n}$ is a configuration $\wh A=\{\wh a_1,\dots,\wh a_n,\wh a_{n+1}\}$ of $n+1$ labeled points in~$\RR^{d+1}$ such that:
\begin{enumerate}[(i)]
\item for $1\leq i\leq d$, $\wh a_i=(a_i,0)\in\RR^{d+1}$,
 \item for $d+1\leq i\leq n$, the point $\wh a_i$ lies in the half-line that starts at $\wh a_{n+1}$ and goes through $(a_i,0)$,
 \item \label{it:hyperplanes} for $d+1\leq i\leq n$, and for every hyperplane $H$ spanned by $d+1$ points of $\left\{\wh a_{{1}},\dots,\wh a_{{i-1}}\right\}$, 
 the points $\wh a_{n+1}$ and $\wh a_i$ lie at the same side of $H$ when $s_i=+$, and at opposite sides if $s_i=-$.
\end{enumerate}

If $s_i=+$ for every $1\leq i \leq n$, the lexicographic lifting is called \defn{positive}.
\end{dfn}

\begin{figure}[ht]
\centering
\begin{subfigure}[b]{0.23\textwidth}
\includegraphics[width=.9\linewidth]{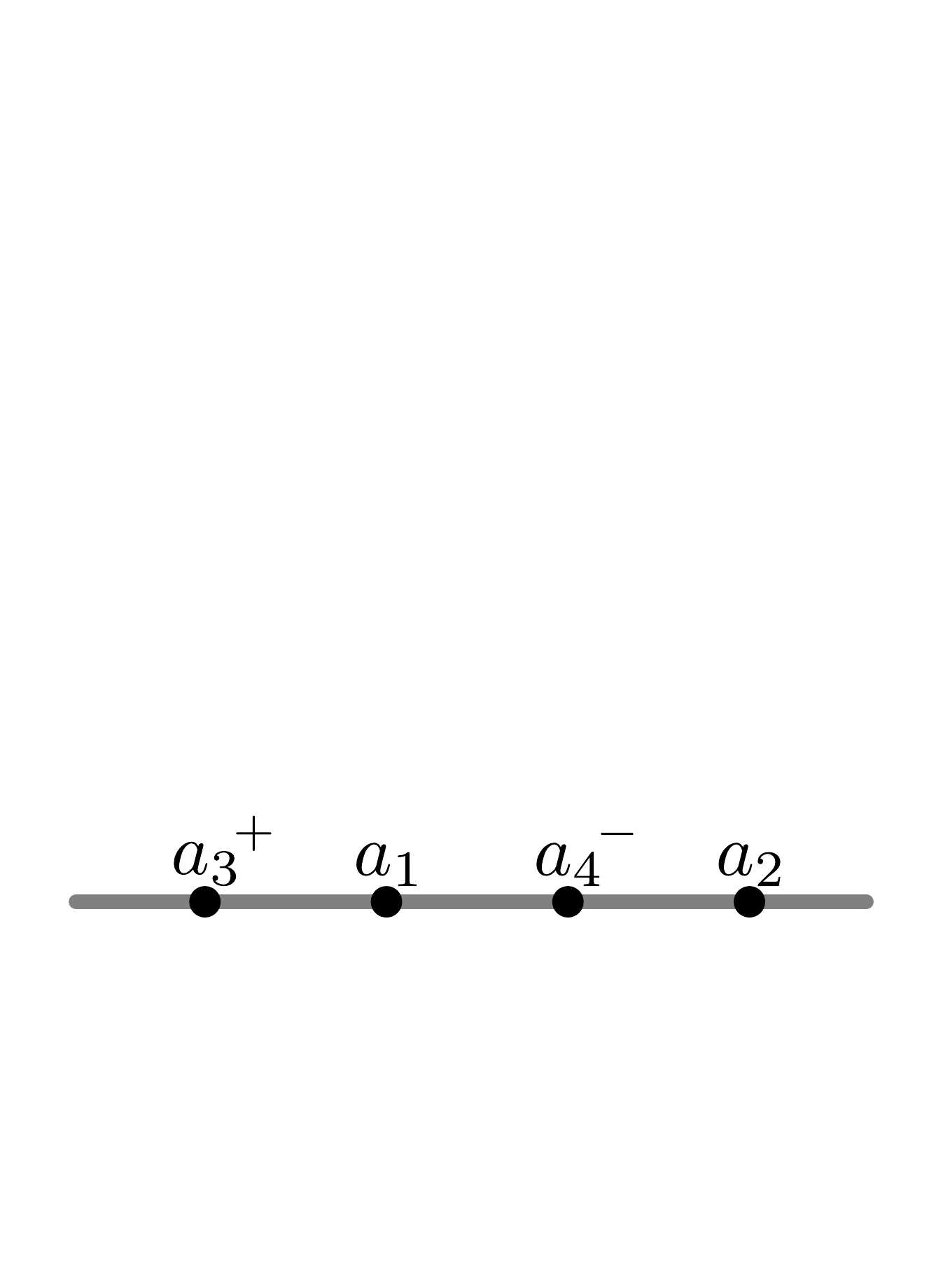}
\caption{$A$}
\label{fig:lifting1}
\end{subfigure}\quad
\begin{subfigure}[b]{0.23\textwidth}
\includegraphics[width=.9\linewidth]{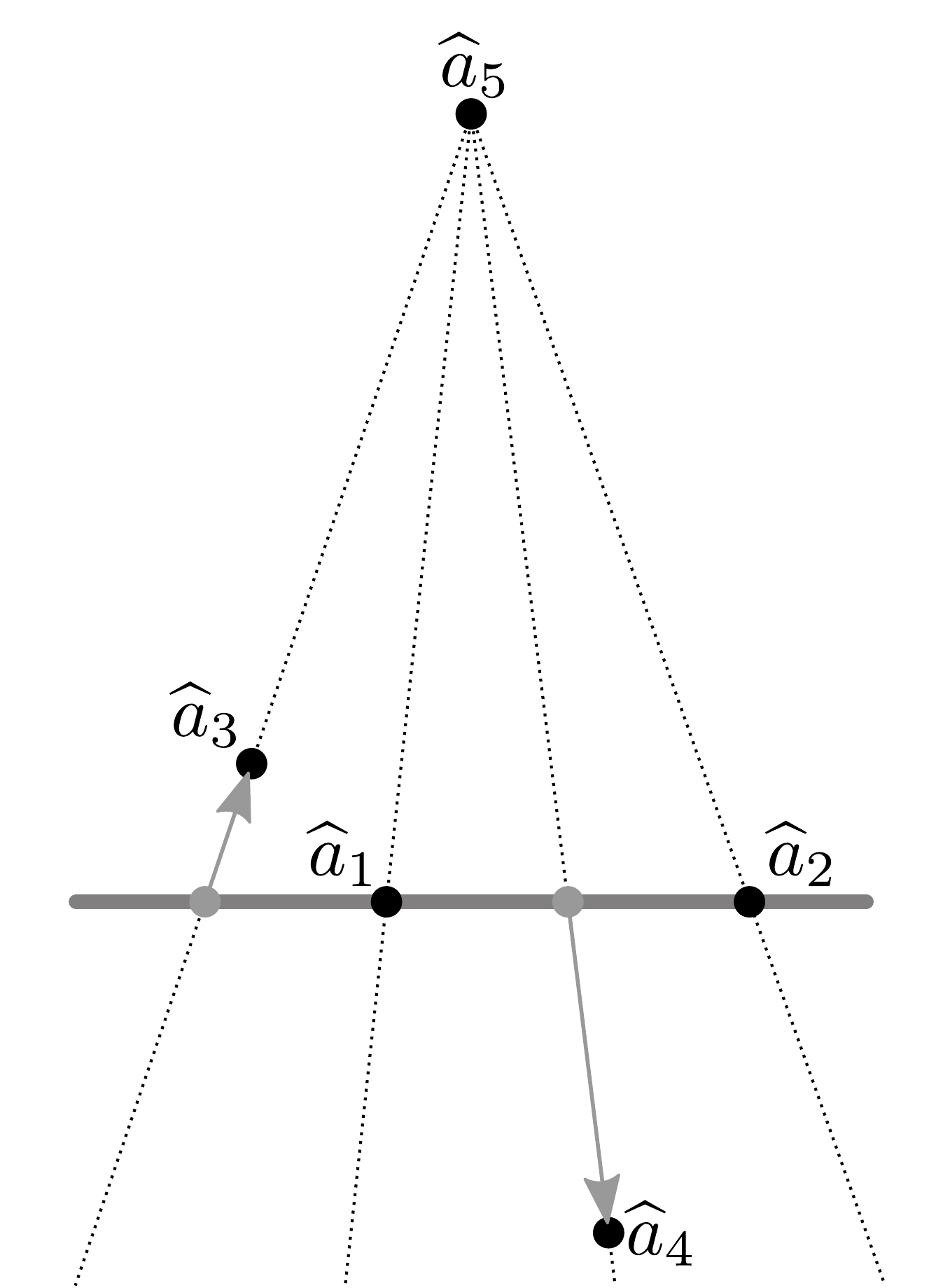}
\caption{$\wh A$}\label{fig:lifting2}
\end{subfigure}\quad
\begin{subfigure}[b]{0.23\textwidth}
\includegraphics[width=.9\linewidth]{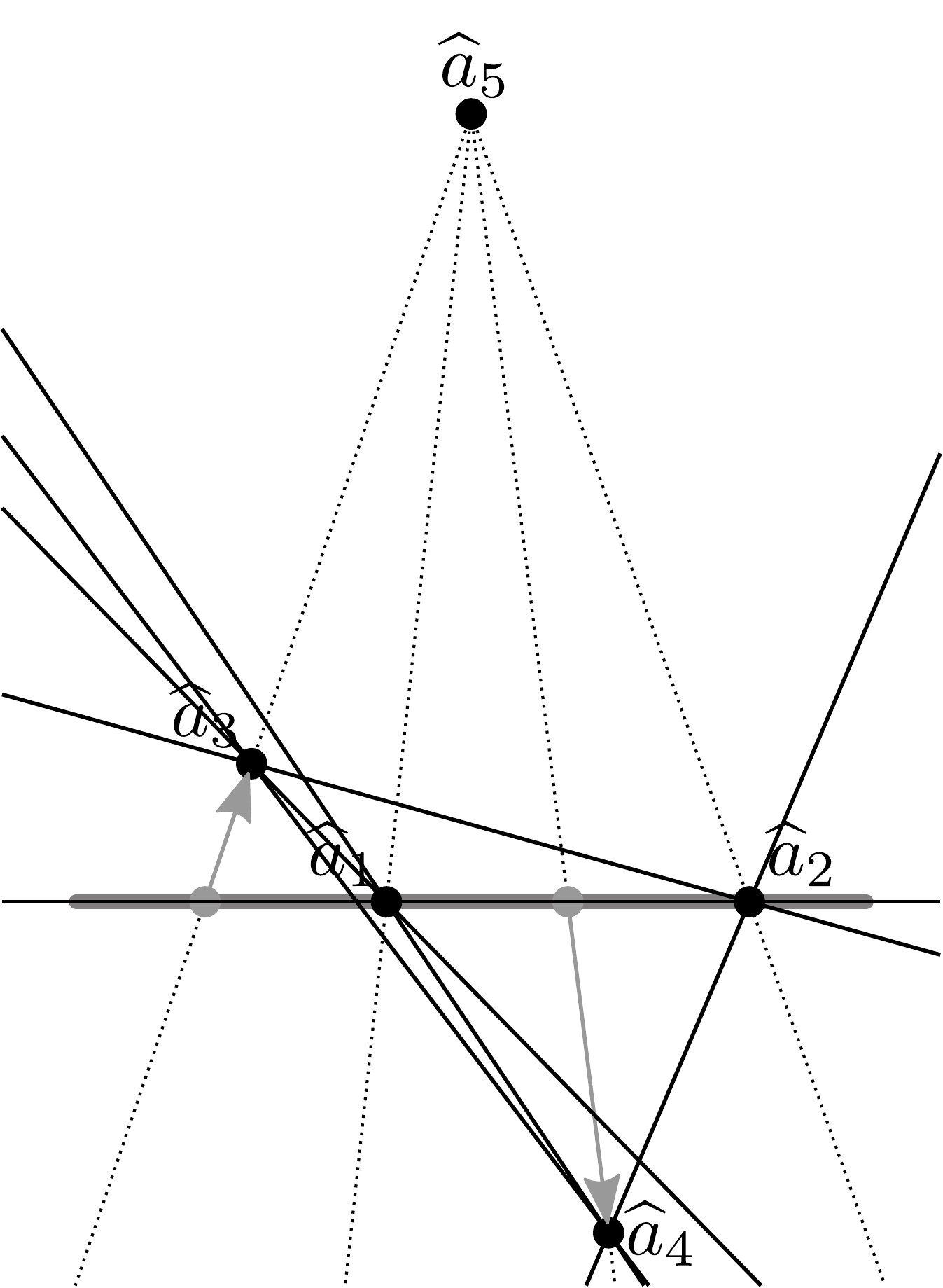}
\caption{Check (\ref{it:hyperplanes})}\label{fig:lifting3}
\end{subfigure}\quad
\begin{subfigure}[b]{0.23\textwidth}
\includegraphics[width=.9\linewidth]{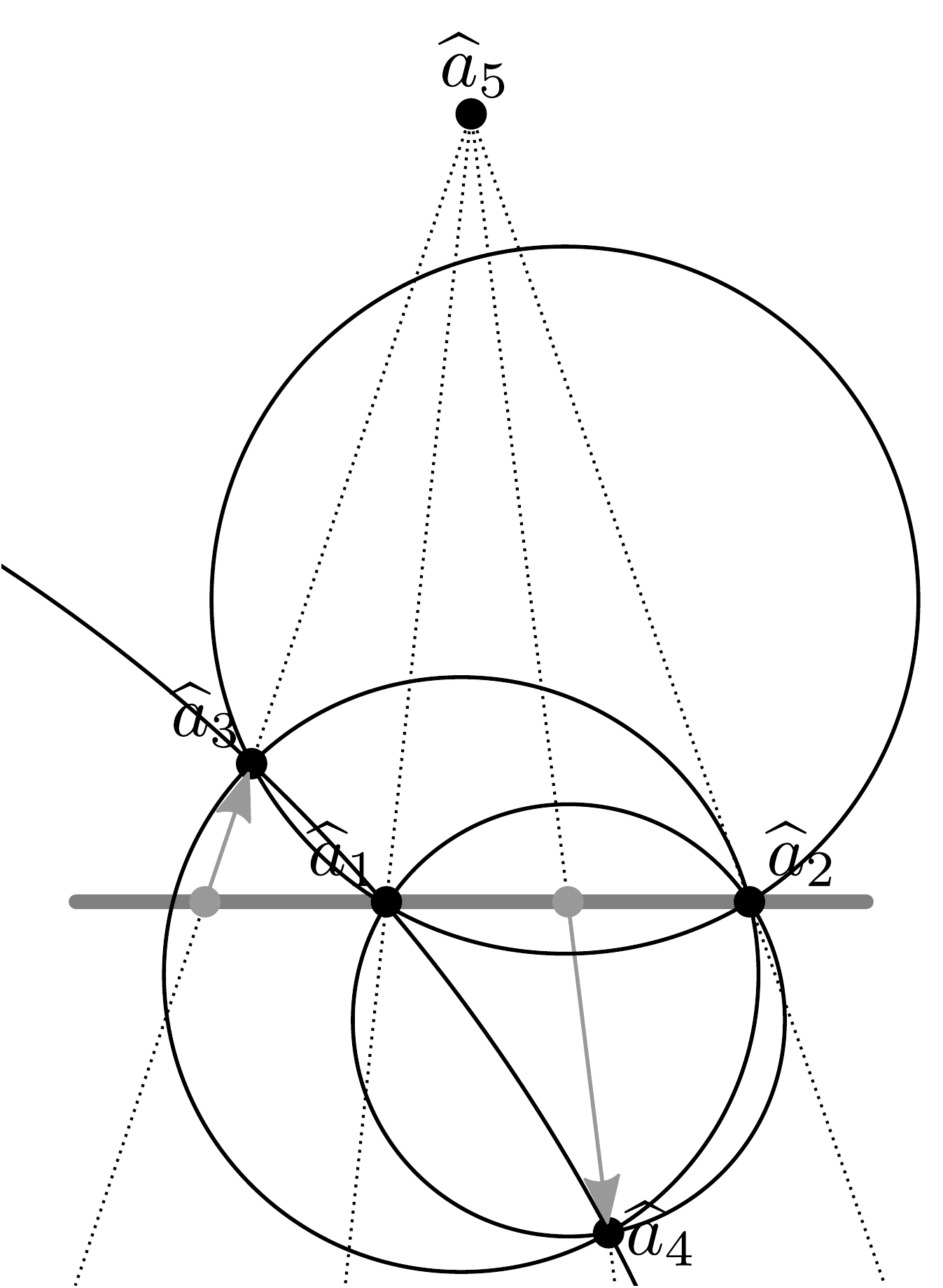}
\caption{Check Delaunay}\label{fig:lifting4}
\end{subfigure}
\caption{A Delaunay lexicographic lifting $\wh A\subset \RR^2$ of a configuration $A\subset \RR^1$. In \ref{fig:lifting3} one can check that (\ref{it:hyperplanes}) is fulfilled, and in \ref{fig:lifting4} that it is a Delaunay lexicographic lifting. This lifting is not positive.}
\label{fig:lifting}
\end{figure}

The proof of the following lemma is straightforward, since one can easily compute the chirotope of $\wh A$ from that of $A$ (compare \cite[Chapter~7]{OrientedMatroids1993}).
\begin{lem}
 The oriented matroid of a lexicographic lifting $\wh A$ of $A$ only depends on the oriented matroid of $A$ and the sequence of signs.
\end{lem}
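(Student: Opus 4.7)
The plan is to induct on $i = d, d+1, \ldots, n$ and show that the oriented matroid of $\wh A^{(i)} := \{\wh a_1, \ldots, \wh a_i, \wh a_{n+1}\}$ is determined by the oriented matroid of $\{a_1, \ldots, a_i\}$ together with the signs $(s_{d+1}, \ldots, s_i)$. Since $\wh A = \wh A^{(n)}$, this proves the lemma. The base case $i = d$ is vacuous: $\wh A^{(d)}$ has only $d+1$ elements, and no $(d+2)$-subset exists, so the rank-$(d+2)$ chirotope is empty.

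For the inductive step, fix a $(d+2)$-tuple; if it does not involve the new element $\wh a_i$, its chirotope value is controlled by induction, so assume it does. Its chirotope value is (up to a fixed reordering sign) the sign of $\wh a_i$ relative to the affine hyperplane $H\subset\RR^{d+1}$ spanned by the other $d+1$ points. In the first case, where $\wh a_{n+1} \notin H$, condition (iii) of Definition~\ref{def:lexicographiclifting} states directly that $\wh a_i$ lies on the same side of $H$ as $\wh a_{n+1}$ precisely when $s_i = +$. Hence the new chirotope value equals $s_i$ times the sign of $\wh a_{n+1}$ relative to $H$, and the latter is a chirotope value of $\wh A^{(i-1)}$, known by induction.

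In the remaining case, where $\wh a_{n+1}\in H$, the hyperplane $H$ is spanned by $\wh a_{n+1}$ and $d$ other lifted points $\wh a_{j_1}, \ldots, \wh a_{j_d}$. Each of these lies on the ray from $\wh a_{n+1}$ through $(a_{j_k},0)$ (or equals $(a_{j_k},0)$ when $j_k\le d$), so $H$ contains all shadows $(a_{j_k},0)$; thus $H$ is the cone with apex $\wh a_{n+1}$ over the affine hyperplane $H'\subset\RR^d$ through $a_{j_1},\ldots,a_{j_d}$. Since $\wh a_i$ lies on the ray from $\wh a_{n+1}$ through $(a_i,0)$, its side of $H$ coincides with the side of $(a_i,0)$, which equals the side of $a_i$ relative to $H'$, a chirotope value of $A$. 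Combining both cases, every new chirotope value is a Boolean function of $s_i$, the oriented matroid of $\wh A^{(i-1)}$, and that of $A$, closing the induction.

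The main subtleties I expect are purely bookkeeping: in Case 1 one must track the reordering sign required to identify the relevant chirotope of $\wh A^{(i-1)}$ (that containing $\wh a_{n+1}$ in the last slot), and in Case 2 one must verify the equality of signs through the projection $\pi:\RR^{d+1}\to\RR^{d}$ onto the first $d$ coordinates, handling uniformly those $j_k\le d$ (where $\wh a_{j_k}=(a_{j_k},0)$) and those $j_k>d$ (where $\wh a_{j_k}$ is strictly above the ground hyperplane). Both steps are routine once the geometric picture is in hand.
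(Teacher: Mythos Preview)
Your argument is correct and is precisely the direct chirotope computation the paper alludes to when it calls the lemma ``straightforward, since one can easily compute the chirotope of $\wh A$ from that of $A$'' (with a reference to \cite[Chapter~7]{OrientedMatroids1993}); the paper gives no further details, whereas you spell out the two cases (tuple avoiding $\wh a_{n+1}$, handled by condition~(iii), versus tuple containing $\wh a_{n+1}$, handled by projecting along rays to recover a chirotope value of $A$). The paper also records, as an alternative viewpoint, that lexicographic liftings are Gale-dual to lexicographic extensions (Remark~\ref{rmk:om}), from which the lemma follows immediately via known oriented-matroid machinery; your direct argument is self-contained and avoids that detour.
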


An alternative way to see this is to observe that lexicographic liftings are dual to lexicographic extensions (cf. \cite[Section~7.2]{OrientedMatroids1993}).
\begin{rmk}\label{rmk:om}
A lex\-i\-co\-graph\-ic lifting of $A$ with signs $s_i$ realizes the dual oriented matroid of a lexicographic extension of the Gale dual of~$A$ with signature $[a_{n}^{-s_n},\dots,a_{d+2}^{-s_{d+2}}]$.
\end{rmk}

We use lexicographic liftings because they preserve homotopy of realization spaces 
(although for our proof we only need the surjectivity of the map $\rsom(\wh A)\rightarrow\rsom(A)$). 
This fact can be found in  {\cite[Lemma 8.2.1 and Proposition 8.2.2]{OrientedMatroids1993}}.
\begin{lem}\label{lem:liftingsurjective}
 For any lexicographic lifting $\wh A$ of $A$, $\rsom(\wh A)$ is homotopy equivalent to $\rsom(A)$.
\end{lem}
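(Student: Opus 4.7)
The plan is to construct a continuous surjection $\pi\colon \rsom(\wh A) \to \rsom(A)$ with contractible fibers; since both spaces are semi-algebraic, this will yield the desired homotopy equivalence. I will work with affine representatives via Lemma~\ref{lem:homogenization}. Given any $\wh A' = \{\wh a'_1, \ldots, \wh a'_{n+1}\}$ realizing $\wh A$, the first $d$ points are affinely independent and $\wh a'_{n+1}$ lies off their affine span (this sign of the chirotope is forced by the oriented matroid of the lifting). These $d+1$ points determine an affine hyperplane $H \cong \RR^d$ together with a distinguished apex $\wh a'_{n+1}$; for each $i \ge d+1$ I project $\wh a'_i$ from $\wh a'_{n+1}$ onto $H$ to obtain $a'_i$, and for $i \le d$ I set $a'_i = \wh a'_i$. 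The configuration $A' = \{a'_1,\ldots,a'_n\}$ realizes the oriented matroid of $A$, the assignment is equivariant under the affine groups, and so it descends to a continuous map $\pi$ on the quotient realization spaces. Surjectivity is immediate because every $A'\in\rsom(A)$ admits a lexicographic lifting.

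To analyze a fiber $\pi^{-1}([A'])$, I normalize so $A' \subset H = \{x_{d+1} = 0\}$. The residual freedom consists of a choice of apex $\wh a'_{n+1}$ in the open half-space $\{x_{d+1} > 0\}$ (a contractible set), together with, for each $i \ge d+1$ in order, a choice of $\wh a'_i$ on the open ray from $\wh a'_{n+1}$ through $(a'_i, 0)$. Condition~(iii) of Definition~\ref{def:lexicographiclifting} requires $\wh a'_i$ to lie on a prescribed side of every hyperplane spanned by $d+1$ of the preceding lifted points; each such hyperplane crosses the open ray in at most one point, so the admissible locus is an open connected sub-interval of the ray, depending continuously on the previously chosen points. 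Iterating from $i = d+1$ to $n$, the fiber is realized as an iterated open-interval bundle over a contractible base, hence is itself contractible; I would then produce a canonical section $\sigma\colon \rsom(A) \to \rsom(\wh A)$ by selecting the midpoint of each admissible interval at each step.

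The hard part will be to verify that the admissible interval is non-empty at every inductive step, i.e.\ that the prescribed sign pattern is actually realizable on the ray in the presence of the previously placed points. This is precisely the content of the preceding lemma: the oriented matroid of $\wh A$ depends only on that of $A$ and on the sign vector $(s_i)$, which ensures that no half-plane constraint ever degenerates or becomes inconsistent, so the interval is indeed always non-empty. Granting this, $\pi$ is a locally trivial fibration with contractible fibers, and the fiberwise midpoint retraction can be promoted to a deformation retraction from $\rsom(\wh A)$ onto $\sigma(\rsom(A))$, establishing the homotopy equivalence.
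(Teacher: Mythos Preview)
Your overall strategy coincides with the paper's: build a projection $\rsom(\wh A)\to\rsom(A)$ by ``collapsing'' along the direction of the last element $\wh a_{n+1}$, and show that the fibers are (iterated) convex sets. The paper works with vector configurations and uses orthogonal projection onto $\wh v_{n+1}^\perp$; you work affinely and use central projection from the apex onto a hyperplane. The paper parameterizes fibers in \emph{decreasing} order of~$i$, you in increasing order; both are legitimate, since the preceding lemma guarantees that any placement satisfying (i)--(iii) realizes the correct oriented matroid.

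There is, however, a concrete slip in your definition of $\pi$. You write that the first $d$ points together with the apex---``these $d+1$ points''---determine an affine hyperplane $H\subset\RR^{d+1}$. But $d$ affinely independent points span only a $(d-1)$-dimensional flat, and the hyperplane through all $d+1$ of them would \emph{contain} the apex, making central projection from $\wh a'_{n+1}$ onto $H$ undefined. The easy repair is to let $H$ be the span of the first $d+1$ points $\wh a'_1,\dots,\wh a'_{d+1}$: in the oriented matroid of any lexicographic lifting of a configuration in general position one has $\chi(1,\dots,d+1,n+1)\neq 0$, so $\wh a'_{n+1}\notin H$ and the projection is well defined. Alternatively, pass to the vector picture and project orthogonally onto $\wh v_{n+1}^\perp$, as the paper does; this sidesteps the choice of $H$ entirely.

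A smaller issue: your proposed section via ``midpoints of admissible intervals'' does not work when $s_i=-$, since then the admissible locus on the ray is an unbounded half-line with no midpoint. You need a different continuous selection (for instance, a point at a fixed distance beyond the last hyperplane crossing), or simply argue---as the paper's sketch does---that contractible fibers in this semi-algebraic setting already yield the homotopy equivalence, without exhibiting an explicit section.
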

\begin{proof}[Proof (sketch)]
Any vector configuration $\wh V=\{\wh v_1,\dots, \wh v_{n+1}\}$ with the same oriented matroid as of 
$\homog{\wh A}$ can be mapped to a configuration $V=\{v_1,\dots,v_n\}$ that realizes $\homog{A}$, 
just by taking $v_i$ to be the orthogonal projection of $\wh v_i$ onto the hyperplane orthogonal to $\wh v_{n+1}$.
This defines a continuous map from $\rsom(\wh A)$ to $\rsom(A)$, which is easily seen to be surjective 
(compare Lemma~\ref{lem:surjective}). 

To see that this is indeed a homotopy equivalence, we can check that the fibers of this projection are balls.
Indeed, once the position of $\wh v_{n+1},\wh v_{n},\dots,\wh v_{i+1}$ is fixed,
the set of valid positions of $\wh v_i$ is a convex subset of the 
line that goes through $\wh v_{n+1}$ and $v_i$.
\end{proof}
To control their Delaunay triangulations, 
we use a particular family of lexicographic liftings (see also \cite{GonskaPadrol2013} and \cite{Seidel85}).
\begin{dfn}
 A \defn{Delaunay lexicographic lifting} of $A$ is a lexicographic lifting $\wh A$ such that for each $d+2<i\leq n+1$, $\wh a_{{i}}$ is not contained in any of the circumspheres of any simplex spanned by $d+2$ points of $\left\{\wh a_{{1}},\dots,\wh a_{{i-1}}\right\}$.
\end{dfn}

\begin{lem}\label{lem:surjective}
For any point configuration $A$ and any $s\in\{+,-\}^n$, there is Delaunay lexicographic lifting.  
\end{lem}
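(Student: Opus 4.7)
The plan is to build $\wh A$ inductively, starting from the apex $\wh a_{n+1}$ and then placing the lifted points $\wh a_{d+1},\ldots,\wh a_n$ one at a time on their prescribed rays. First I would choose $\wh a_{n+1}=(p,h)\in\RR^{d+1}$ with $p\in\RR^d$ generic and $h>0$ sufficiently large; the height $h$ will guarantee that the rays from $\wh a_{n+1}$ to the points of the base configuration are so close to vertical that they meet transversally every hyperplane that is not near-vertical, while genericity of $p$ will rule out a finite list of forbidden codimension-one conditions that emerge during the construction.

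At the inductive step, with $\wh a_1,\ldots,\wh a_{i-1}$ already fixed, I would parameterize the half-line
\[ \ell_i(t) = \wh a_{n+1} + t\bigl((a_i,0) - \wh a_{n+1}\bigr),\qquad t\ge 0, \]
and consider, for each hyperplane $H$ spanned by $d+1$ of $\wh a_1,\ldots,\wh a_{i-1}$, the parameter $t_H$ at which $\ell_i$ crosses $H$. Because $\wh a_{n+1}$ has been chosen in generic position and with large height, $\ell_i$ is transversal to every such $H$, and $t_H>0$. Close to $t=0$, the point $\ell_i(t)$ lies on the same side of every $H$ as $\wh a_{n+1}$; past $\max_H t_H$, it lies on the opposite side of each $H$. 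Thus I would pick $t\in(0,\min_H t_H)$ when $s_i=+$ and $t\in(\max_H t_H,\infty)$ when $s_i=-$; in either case the admissible set is a nonempty open interval, from which I would further choose $t$ generically so that $\wh a_i$ avoids the finitely many circumspheres through $d+2$ of the previously placed points. This yields the Delaunay condition on $\wh a_i$.

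It remains to verify the Delaunay condition at the apex itself, namely that $\wh a_{n+1}$ does not lie on any circumsphere through $d+2$ of the lifted points $\wh a_1,\ldots,\wh a_n$. Since every $\wh a_i$ depends algebraically on the apex and on the choices made during the induction, the set of apex positions that would violate this condition is a proper Zariski-closed subset of the initial choice space, and is therefore avoided by the generic choice in the first step.

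The crux of the argument, and the only delicate point, is the sign-condition step in the case $s_i=-$: the ray $\ell_i$ must cross \emph{every} hyperplane spanned by $d+1$ previous points at a positive parameter, which is not automatic for an arbitrary choice of apex. This is precisely what is achieved by placing $\wh a_{n+1}$ high and generically, so that no such hyperplane is parallel to any ray $\ell_i$ and every intersection parameter is positive. Once this geometric setup is in place, the Delaunay condition is obtained by a routine application of genericity.
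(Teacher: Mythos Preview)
Your inductive strategy is close in spirit to the paper's, but there is a genuine gap in the $s_i=-$ step. You assert that, once the apex is chosen high and generically, every hyperplane $H$ spanned by $d+1$ previously placed points is crossed by the ray $\ell_i$ at a \emph{positive} parameter $t_H$. This does not follow from the apex choice alone: the hyperplanes $H$ depend on the earlier inductive parameters $t_{d+1},\dots,t_{i-1}$, and a bad (yet perfectly generic) earlier choice can make a later step infeasible. For a concrete failure in dimension $d=1$ take $a_1=0$, $a_2=1$, $a_3=2$ and apex $\wh a_4=(p,h)$ with arbitrary $p$ and $h>0$. At step $i=2$ there are no hyperplanes and no circumspheres, so your recipe allows any generic $t_2>0$; take $t_2=3$, giving $\wh a_2=(3-2p,-2h)$. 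A direct computation shows that the line $H$ through $\wh a_1=(0,0)$ and $\wh a_2$ meets $\ell_3$ at $t_H=t_2/(2-t_2)=-3$, independently of $p$ and $h$. Hence if $s_3=-$ the admissible interval $(\max_H t_H,\infty)$ does not place $\wh a_3$ on the required side of $H$, and the induction halts. This is not a measure-zero accident that ``generic'' can cure: every $t_2>2$ yields $t_H<0$.

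The paper avoids this difficulty by first putting the apex at the point at infinity in the vertical direction, so that each ray is the vertical line $\{(a_i,h):h\in\RR\}$. Then ``same side as the apex'' simply means ``above'' and ``opposite side'' means ``below''; since (for $A$ in general position) no hyperplane through $d+1$ lifted points is vertical, one can always get above all of them by taking $h_i\gg 0$ and below all of them by taking $h_i\ll 0$, and the Delaunay condition is handled by the same ``make $|h_i|$ larger still'' argument. Only at the very end does one apply a projective transformation to bring the apex to a finite height; the empty-sphere conditions are strict inequalities, so they survive this last perturbation.
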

\begin{proof}
 To construct one, just replace $a_i$ by $\wh a_i=(a_i,h_i)\in\RR^{d+1}$ for some $h_i$ large enough so that $\wh a_{{i}}$ is above or below every hyperplane spanned by $\left\{\wh a_{{1}},\dots,\wh a_{{i-1}}\right\}$ and outside any of the circumspheres spanned by $\left\{\wh a_{{1}},\dots,\wh a_{{i-1}}\right\}$. Finally, set $\wh a_{{n+1}}$ to be the ``point at infinity'' $\wh a_{{n+1}}=(0,+\infty)$ and apply a projective transformation that preserves the hyperplane spanned by $\{a_1,\dots,a_d\}$ and sends $\wh a_{{n+1}}$ to $(0,h_{n+1})$ for some large $h_{n+1}>0$.  (This is possible, because 
the $\wh a_{{i}}$ are chosen so that the empty sphere condition will hold after a small perturbation.)
\end{proof}

We end with the following straightforward consequence of Lemma~\ref{lem:stereographic}.

\begin{cor}\label{cor:stereographicispositive}
Let $A=\{a_1,\dots,a_n\}$ be a configuration of $n$ labeled points in general position in~$\RR^d$ and let $\wh A$ be a Delaunay lexicographic lifting of $A$. 
Then $\sterproj^{-1}(\wh A)\cup \NP$ is a positive lexicographic lifting of $\wh A$ (with respect to the same order) inscribed on $\Sp{d+1}$.  
\end{cor}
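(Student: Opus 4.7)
The plan is to show that $\wwh{A}:=\sterproj^{-1}(\wh A)\cup\{\NP\}$ realizes the oriented matroid of a positive lexicographic lifting of~$\wh A$, with $\NP$ playing the role of the apex $\wwh a_{n+2}$. By the lemma immediately following Definition~\ref{def:lexicographiclifting}, the oriented matroid of such a lifting depends only on the oriented matroid of $\wh A$ and on the sign vector, so it suffices to verify the sign conditions of Definition~\ref{def:lexicographiclifting} in the oriented-matroid sense.

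First, condition~(ii) holds geometrically: by the very definition of the inverse stereographic projection from $\NP$, the point $\sterproj^{-1}(\wh a_i)$ is the unique intersection of $\Sp{d+1}\setminus\{\NP\}$ with the line through $\NP$ and $(\wh a_i, 0)\in\RR^{d+2}$, and it lies on the same side of $\NP$ as $(\wh a_i, 0)$. So the lifted points lie on the correct half-lines from the apex.

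The heart of the argument is condition~(iii) with every $s_i=+$. Fix $d+3\leq i\leq n+1$ (the condition is vacuous for $i=d+2$), and let $H\subset\RR^{d+2}$ be a hyperplane spanned by $d+2$ points of $\{\sterproj^{-1}(\wh a_1),\dots,\sterproj^{-1}(\wh a_{i-1})\}$. By Lemma~\ref{lem:stereographic}(i), such an $H$ corresponds to the circumsphere $S$ in $\RR^{d+1}$ through the $d+2$ associated points of $\{\wh a_1,\dots,\wh a_{i-1}\}$, and $\sterproj^{-1}(\wh a_i)$ lies on the ``above'' side of $H$ if and only if $\wh a_i$ lies outside $S$. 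Since $\NP$ is the stereographic preimage of the point at infinity of $\RR^{d+1}$, it sits on the unbounded side of every bounded sphere downstairs, and thus always on the ``above'' side of~$H$. Therefore $\sterproj^{-1}(\wh a_i)$ and $\NP$ lie on the same side of $H$ precisely when $\wh a_i$ lies outside $S$---which is exactly the Delaunay lexicographic lifting hypothesis on $\wh A$. The remaining chirotope entries, those involving $\NP$ together with only $d+1$ of the $\sterproj^{-1}(\wh a_j)$, are controlled by Lemma~\ref{lem:stereographic}(ii): hyperplanes in $\RR^{d+2}$ through $\NP$ and $d+1$ points of $\sterproj^{-1}(\wh A)$ correspond bijectively to affine hyperplanes in $\RR^{d+1}$ through the associated points of $\wh A$, with matching sidedness. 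This identifies those chirotope entries with the chirotope of $\wh A$, which is precisely what the lex lifting recipe uses to determine the remaining signs.

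The main obstacle is pinning down orientations so that $\NP$ really is the \emph{positive} apex rather than the negative one. The pivotal observation is that the two sides of any hyperplane spanned by $d+2$ points of $\sterproj^{-1}(\wh A)$ correspond, under Lemma~\ref{lem:stereographic}(i), to the bounded interior and the unbounded exterior of the associated circumsphere downstairs, with $\NP$ always lying on the unbounded side. Combined with the sign interpretation in Definition~\ref{def:lexicographiclifting}, this yields the positive lexicographic sign $s_i=+$ uniformly, completing the identification.
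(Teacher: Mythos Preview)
Your argument is correct and follows the same route as the paper's proof: the core step is to invoke Lemma~\ref{lem:stereographic}\eqref{it:spspheres} to convert the Delaunay condition ``$\wh a_i$ lies outside every circumsphere through $d+2$ earlier points'' into the positive-lifting condition ``$\sterproj^{-1}(\wh a_i)$ lies on the same side of the corresponding hyperplane as~$\NP$.'' The paper compresses this into a single sentence, while you additionally spell out why condition~(ii) holds, why the case $i=d+2$ is vacuous, and why the chirotope entries involving~$\NP$ agree with those of~$\wh A$ via Lemma~\ref{lem:stereographic}\eqref{it:sphyperplanes}; these are welcome clarifications but not a different method.
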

\begin{proof}
The condition that $\wh A$ is a Delaunay lexicographic lifting implies that $a_j$ is outside every circumsphere spanned by points in $\{a_1,\dots,a_{j-1}\}$, which by Lemma~\ref{lem:stereographic} implies that $\mr a_j$ is above (i.e. at the same side as $\NP$) every hyperplane spanned by points in $\{\mr a_1,\dots,\mr a_{j-1}\}$.
\end{proof}

\subsection{Neighborly oriented matroids}

A crucial property of even-dimensional neighborly  configurations is that
their oriented matroids are \defn{rigid}. %

\begin{thm}[{\cite[Theorem 4.2]{Sturmfels1988}\cite{Shemer1982}}]\label{thm:rigid}
If $A$ is an even-dimensional neighborly point configuration, then the oriented matroid 
of $A$ is rigid, i.e. $\rsom(A)= \rspol(A)$.
\end{thm}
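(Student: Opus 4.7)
The plan is to invoke Shemer's structural theory of neighborly polytopes and argue that every chirotope sign of $A$ is forced by the face lattice of $P = \conv(A)$. Recall the oriented matroid of $A$ is encoded by the sign map $\chi^A$ on $(2k+1)$-tuples, where each sign is $\signe\det(\bar a_{i_1},\dots,\bar a_{i_{2k+1}})$ on the homogenization. Two realizations of the same face lattice automatically agree on which $2k$-tuples span facets, together with the common sign of all remaining vertices relative to each such facet. Rigidity is exactly the assertion that this "facet data" forces the rest of~$\chi^A$.

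First I would reduce the problem to a statement about cocircuits. Each supporting hyperplane spanned by $2k$ vertices gives a cocircuit of the underlying oriented matroid: all other vertices have a common sign. Since every $k$-subset of $\verts(P)$ is a face, there are enough such hyperplanes in the sense that every $2k$-subset of vertices is contained in some facet (this is where even-dimensional neighborliness is critical). Consequently, every sign $\chi^A(i_1,\dots,i_{2k+1})$ in which $\{i_1,\dots,i_{2k}\}$ forms a facet is \emph{directly} readable from the face lattice.

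Next I would deal with the tuples $(i_1,\dots,i_{2k+1})$ for which no $2k$-subset spans a facet. Here I would appeal to Shemer's theorem that the vertex figure $P/v$ of a neighborly $2k$-polytope at any vertex $v$ is again a neighborly $(2k-1)$-polytope, whose face lattice is determined by that of $P$. Choosing $v = a_{i_0}$ and passing to the vertex figure, the sign $\chi^A(i_0,i_1,\dots,i_{2k})$ corresponds (up to an orientation of $v$ which is fixed once and for all) to a chirotope sign of the vertex figure. An induction on dimension, anchored at the classical fact that low-dimensional neighborly configurations are rigid (or, by Gale duality, that balanced oriented matroids are determined by their cocircuits), then propagates the facet signs to all signs.

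The main obstacle I expect is consistency of the inductive sign assignment: the sign of $\chi^A(i_0,\ldots,i_{2k})$ computed from two different choices of "base vertex" $v$ must coincide. I would handle this by invoking the three-term Grassmann--Pl\"ucker relations on the candidate chirotope values: once enough signs are pinned down by facets (and this is guaranteed by the $k$-neighborly hypothesis, which ensures an abundance of facets meeting any prescribed subset of vertices), the remaining signs are forced algebraically, and the consistency of the induction follows from the validity of Pl\"ucker relations in any actual realization. The even-dimensionality of $P$ enters precisely here: for odd-dimensional neighborly polytopes one does not have enough facets to pin down the missing signs, and indeed rigidity can fail.
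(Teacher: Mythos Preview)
The paper does not prove this theorem at all; it is quoted from \cite{Sturmfels1988} and \cite{Shemer1982} and used as a black box. So there is no ``paper's own proof'' to compare against. That said, your sketch has a real gap worth flagging.

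Your induction passes to the \emph{vertex figure} $P/v$, which drops the dimension by one: from a $2k$-polytope you land in a $(2k{-}1)$-polytope. But $2k{-}1$ is odd, and as you yourself observe in your final paragraph, rigidity can fail for odd-dimensional neighborly polytopes. So the inductive hypothesis is simply unavailable at the next step, and the induction does not close. The Shemer result you actually need is not about vertex figures but about \emph{subpolytopes}: if $P$ is a neighborly $2k$-polytope and $v$ is any vertex, then $\conv(\verts(P)\setminus\{v\})$ is again a neighborly $2k$-polytope. This keeps the dimension even and lets an induction on the number of vertices go through. In the base case one is looking at a neighborly $2k$-polytope with $2k{+}2$ vertices, whose unique Radon partition (the sole circuit) is read off from the pair of non-facet $2k$-subsets; from there the oriented matroid is determined.

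A smaller issue: your claim that ``every $2k$-subset of vertices is contained in some facet'' is not what $k$-neighborliness gives you. In a simplicial $2k$-polytope each facet has exactly $2k$ vertices, so ``contained in a facet'' would mean ``is a facet'', which is false in general. What neighborliness does give is that every $k$-subset is a face; the leverage comes not from covering all $2k$-subsets by facets but from the subpolytope structure above.
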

Kortenkamp \cite{Kortenkamp1997} found a way to use lexicographic liftings to construct neighborly 
point configurations.
\begin{thm}[{\cite[Theorem 1.2]{Kortenkamp1997}}]\label{thm:Kortenkamp}
For any point configuration $A$ with $d+4$ points in general position in $\RR^d$ there is an even-dimensional neighborly configuration $\wh A$ of $2d+8$ points in $\RR^{2d+4}$ obtained from $A$ by a sequence of lexicographic liftings.
\end{thm}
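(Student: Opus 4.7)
The route I would take is via Gale duality, exploiting the fact that both the source and target configurations have rank-$3$ Gale transforms. Since $A$ has $d+4$ points in $\RR^d$, its Gale dual $B$ has rank $(d+4)-d-1 = 3$. Similarly, the target $\wh A$, with $2d+8$ points in $\RR^{2d+4}$, has a Gale dual $\wh B$ of rank $(2d+8)-(2d+4)-1 = 3$. By the Remark on lexicographic liftings, a lex lifting of $A$ dualizes to a lex extension of $B$ (with the reversed signature), so performing $d+4$ liftings on $A$ to reach $\wh A$ is equivalent to performing $d+4$ extensions on $B$ to reach $\wh B$.

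The first step is to translate neighborliness across Gale duality. For a simplicial $(2d+4)$-polytope with $2d+8$ vertices, being neighborly, i.e.\ $(d+2)$-neighborly, is equivalent to the condition that every subset of $(2d+8)-(d+2) = d+6$ elements of the Gale transform positively spans $\RR^3$ (is \emph{totally cyclic}). So the problem reduces to: given the rank-$3$ configuration $B$, produce by $d+4$ lex extensions a rank-$3$ configuration $\wh B$ of $2d+8$ elements in which every $(d+6)$-subset is totally cyclic.

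The second step is the inductive construction of $\wh B$. I would add the new vectors one at a time by lex extensions, maintaining the invariant that the partial configuration has no ``short'' cocircuit, so that the totally-cyclic property on $(d+6)$-subsets holds when the final size is reached. In rank~$3$, the lex-extension sign pattern fixes the combinatorial position of the new vector relative to each existing hyperplane through pairs of earlier vectors, but still leaves an open, non-empty region of admissible placements in the plane. The plan is to select the new vector in a generic sector of this region, so that each extension strengthens (rather than obstructs) the required spanning condition; this is essentially the rank-$3$ analogue of placing the new point in a ``sufficiently cyclic'' position, modeled on the Gale transforms of cyclic polytopes.

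The third and final step is to dualize back: the resulting $\wh A$, obtained from $A$ by $d+4$ lex liftings, has $2d+8$ points in even dimension $2d+4$, and by construction its Gale transform $\wh B$ satisfies the $(d+6)$-totally-cyclic condition. Hence $\wh A$ is neighborly. The main obstacle is Step~2: one must verify that the sign constraints imposed by the lex-extension procedure are compatible with a planar placement avoiding every forbidden short cocircuit. This is a finite combinatorial bookkeeping problem, made tractable by the fact that rank-$3$ oriented matroids admit a transparent description by circular sequences of covectors, and the forbidden configurations have positive codimension among the admissible extensions at each inductive step.
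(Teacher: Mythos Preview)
The paper does not prove this theorem; it simply quotes it from Kortenkamp's paper \cite{Kortenkamp1997} as an external ingredient. So there is no proof in the paper to compare against.

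That said, your plan is essentially Kortenkamp's own approach: pass to the rank-$3$ Gale dual, translate neighborliness into the condition that every $(d+6)$-subset of $\wh B$ is positively spanning (equivalently, every open halfspace through the origin contains at least $d+3$ of the $2d+8$ vectors), and then build $\wh B$ from $B$ by $d+4$ lexicographic extensions. The arithmetic checks (rank of both Gale duals equal to $3$, and the number $d+6$) are correct, and your use of the paper's Remark~\ref{rmk:om} to dualize liftings to extensions is exactly right.

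Where your write-up is loose is Step~2. A lexicographic extension with a fixed signature $[b_{i_1}^{\epsilon_1},b_{i_2}^{\epsilon_2},b_{i_3}^{\epsilon_3}]$ produces a \emph{unique} oriented matroid, so there is no ``open region of admissible placements'' to choose from at the combinatorial level; the only freedom is in the choice of signature. Your sentence about selecting the new vector ``in a generic sector'' conflates the realization picture with the oriented-matroid operation. What actually has to be argued is that at each step one can \emph{choose a signature} so that the new element does not create a halfspace with too few vectors on one side; Kortenkamp does this by, roughly speaking, placing each new element near the antipode of an existing one, which is expressible as a specific lex-extension signature. Once you make the signatures explicit and verify the halfspace count inductively, the argument goes through; as written, ``finite combinatorial bookkeeping'' hides the one place where a concrete choice must be exhibited.
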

Finally, the following result can be found in \cite{Padrol2013} (compare also~\cite{GonskaPadrol2013}), where it is used to construct many neighborly polytopes.
\begin{thm}[{\cite[Theorem 4.2]{Padrol2013}}]\label{thm:neighborlylift}
 Let $A$ be a neighborly point configuration in general position, let $\wh A$ be a lexicographic lifting of $A$ and let $\wwh{A}$ be a positive lexicographic lifting of $\wh A$ (with respect to the same order). Then $\wwh A$ is neighborly.
\end{thm}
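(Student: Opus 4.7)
The plan is to pass to the Gale dual via Remark~\ref{rmk:om}, translating our two successive lexicographic \emph{liftings} into two successive lexicographic \emph{extensions}, and then to chase cocircuit sizes through the two extensions. Let $m := \lfloor d/2\rfloor$; the hypothesis is that $A$ is $m$-neighborly in $\RR^d$ and the goal is to show $\wwh A$ is $(m+1)$-neighborly in $\RR^{d+2}$. Under Gale duality, a configuration in general position is $k$-neighborly if and only if every cocircuit of its Gale transform has at least $k+1$ positive and at least $k+1$ negative entries. Since all three Gale duals $A^*, \wh A^*, \wwh A^*$ live in the fixed ambient dimension $n - d - 1$, the hypothesis becomes the cocircuit bound $|C^+|, |C^-| \geq m+1$ for every cocircuit $C$ of $A^*$, and the conclusion becomes $|C^+|, |C^-| \geq m+2$ for every cocircuit of $\wwh A^*$.

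By Remark~\ref{rmk:om}, $\wh A^*$ is a lex extension of $A^*$ adding an element $b_1$ with signature $\sigma_1 = [a_n^{-s_n}, \ldots, a_{d+2}^{-s_{d+2}}]$, and $\wwh A^*$ is a lex extension of $\wh A^*$ adding $b_2$ with all-negative signature $\sigma_2 = [\wh a_{n+1}^-, \wh a_n^-, \ldots, \wh a_{d+3}^-]$; crucially, the \emph{first} entry of $\sigma_2$ is exactly $b_1^-$. I would then invoke the standard description of lex extension cocircuits (see \cite[\S7.2]{OrientedMatroids1993}): each cocircuit $C'$ of $M[b := [a_{i_1}^{t_1}, \ldots]]$ arises uniquely from a cocircuit $C$ of $M$ with $C'(b) = t_j\, C(a_{i_j})$ for the smallest $j$ where $C(a_{i_j}) \ne 0$ (and $C'(b)=0$ if $C$ vanishes on the whole signature support). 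Since $A$ is in general position, $A^*$ is uniform with cocircuits of size $d + 2$, which cannot fit into the $(d+1)$-element complement of $\sigma_1$'s support; hence every cocircuit of $A^*$ hits $\sigma_1$ and its extension $C'$ has $C'(b_1) \ne 0$, growing exactly one of $|C^+|, |C^-|$ by $1$. The same argument, applied to $\wh A^*$ (which remains uniform) and to $\sigma_2$, shows $C''(b_2) \ne 0$ for every cocircuit.

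The decisive balancing step is that $\sigma_2$ begins with $b_1^-$: since $C'(b_1) \ne 0$, the extension rule forces $C''(b_2) = -C'(b_1)$, so $b_1$ and $b_2$ receive \emph{opposite} signs in $C''$. Whichever of $|C^+|$ or $|C^-|$ the first extension augmented, the second extension augments the complementary part, so $|C''^+| = |C^+|+1 \geq m+2$ and $|C''^-| = |C^-|+1 \geq m+2$ for every cocircuit of $\wwh A^*$; this is the Gale-dual of $(m+1)$-neighborliness of $\wwh A$. The main obstacle is the general-position propagation underlying the cocircuit-size counts: to apply the argument at each step, one needs $A^*$ and $\wh A^*$ to be uniform with the expected cocircuit sizes, which follows from $A$ being in general position together with the fact that a lex extension placed generically with respect to a uniform matroid remains uniform (a routine check standard in the theory of lex extensions). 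Once this bookkeeping is in place, the balancing via the first entry $b_1^-$ of $\sigma_2$ is exactly the mechanism that delivers the $+1$ in \emph{both} parts of every cocircuit, and hence the neighborliness of $\wwh A$ in $\RR^{d+2}$.
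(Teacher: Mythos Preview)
The paper does not supply its own proof of this theorem; it is quoted from \cite{Padrol2013} and used as a black box. Your strategy---pass to the Gale dual via Remark~\ref{rmk:om}, convert the two liftings into two lexicographic extensions, and track the cocircuit balance that encodes neighborliness---is indeed the right framework, and your observation that $\sigma_2$ begins with $b_1^{-}$ is the key mechanism.

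There is, however, a real gap. Your assertion that ``each cocircuit $C'$ of $M[b]$ arises uniquely from a cocircuit $C$ of $M$'' is false. In a single-element extension $M' = M \cup b$ with $b$ in general position, the cocircuits of $M'$ come in two kinds: those of the form $(C,\sigma(C))$ for a cocircuit $C$ of $M$, \emph{and} new cocircuits $C'$ with $C'(b)=0$, whose underlying hyperplane is $\{b\}$ together with a coline of $M$ (this is part of the Las Vergnas description of single-element extensions in \cite[Chapter~7]{OrientedMatroids1993}). Since the extensions here are uniform, there are $\binom{n}{r-2}>0$ such new cocircuits at each stage. Your balancing step therefore only treats the cocircuits $C''$ of $\wwh{A}^*$ with $C''(b_1)\neq 0$ and $C''(b_2)\neq 0$. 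For a cocircuit with $C''(b_1)=0$ (take any hyperplane of $\wwh{A}^*$ through $b_1$), the leading entry $b_1^{-}$ of $\sigma_2$ is skipped and the identity $C''(b_2)=-C''(b_1)$ no longer applies; the restriction to $[n]$ is then merely a covector of $A^*$ of support $d+3$, which in the even case $d=2m$ can split as $(m+1,m+2)$, and nothing in your argument forces $b_2$ onto the short side. The cases $C''(b_2)=0$ and $C''(b_1)=C''(b_2)=0$ are likewise unaddressed. Controlling these extra cocircuits is where the substance of the cited proof lies, and it requires a finer case analysis of how the lexicographic signatures interact with the rank-$2$ contractions of $A^*$ at each coline.
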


\subsection{The construction}
Here is the main technical result of this section.
\begin{lem}\label{lem:mainlifting}
For every configuration $A$ of $n$ points in general position in $\RR^{n-4}$
there exists an inscribed neighborly polytope $P$ with $2n+2$ vertices in $\RR^{2n-2}$ 
and sets $X$ and $Y$, homotopy equivalent to $\rsins(P)$ and $\rsom(A)$ 
respectively, 
such that $Y$ is a retract of $X$.
\end{lem}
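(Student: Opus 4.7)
The plan is to build $P$ from $A$ in three stages. First, apply Kortenkamp's theorem (Theorem \ref{thm:Kortenkamp}) to the general position configuration $A$ of $n=(n-4)+4$ points in $\RR^{n-4}$. This produces, via a sequence of lexicographic liftings, a neighborly configuration $A^{(1)}$ with $2n$ points in $\RR^{2n-4}$. Second, apply one Delaunay lexicographic lifting of $A^{(1)}$ (Lemma \ref{lem:surjective}) to obtain $A^{(2)}$ with $2n+1$ points in $\RR^{2n-3}$; by Theorem \ref{thm:neighborlylift}, $A^{(2)}$ remains neighborly. Third, let $A^{(3)}:=\sterproj^{-1}(A^{(2)})\cup\{\NP\}\subset\Sp{2n-3}\subset\RR^{2n-2}$. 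By Corollary \ref{cor:stereographicispositive}, $A^{(3)}$ is a \emph{positive} lexicographic lifting of $A^{(2)}$ (with respect to the same order), hence a second application of Theorem \ref{thm:neighborlylift} says that $A^{(3)}$ is neighborly. Set $P:=\conv(A^{(3)})$; then $P$ is an inscribed neighborly polytope with $2n+2$ vertices in $\RR^{2n-2}$, and $2n-2$ is even so the rigidity theorem below applies.

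Next, set $Y=\rsom(A)$ and $X=\rsins(P)$. Iterating Lemma \ref{lem:liftingsurjective} across the chain of liftings $A\rightsquigarrow A^{(1)}\rightsquigarrow A^{(2)}\rightsquigarrow A^{(3)}$ yields homotopy equivalences
\[
\rsom(A)\;\htpy\;\rsom(A^{(1)})\;\htpy\;\rsom(A^{(2)})\;\htpy\;\rsom(A^{(3)}).
\]
Since $P$ is neighborly of even dimension, Theorem \ref{thm:rigid} gives $\rsom(A^{(3)})=\rsom(P)=\rspol(P)$, so all the spaces above are homotopy equivalent to $\rspol(P)$.

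The remaining task is to produce the retraction $X=\rsins(P)\rightarrow\rsom(A^{(2)})$, together with a continuous section. This is directly modeled on the proof of Lemma \ref{lem:insLawrence}. First, invoke Lemma \ref{lem:eqiv_relins} to replace the $\Mob{\Sp{2n-3}}$ quotient in $\rsins(P)$ by the $\Ort{\RR^{2n-2}}$ quotient. Given an inscribed realization, use an orthogonal transformation to send the vertex labeled $\NP$ to a distinguished point of $\Sp{2n-3}$; applying $\sterproj$ to the remaining $2n+1$ vertices then yields a realization of $A^{(2)}$ in $\RR^{2n-3}$. This defines the projection $X\to\rsom(A^{(2)})$. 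The fibers are parametrized by the choice of the sphere center and radius (equivalently, the choice of the hyperplane used to set up stereographic coordinates), so they are star-convex and in particular contractible; a continuous section is obtained by picking the barycenter of the fiber. Composing with the chain of sections produced by Lemma \ref{lem:liftingsurjective} gives the desired section $Y\to X$.

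The main obstacle I expect is precisely the last step: verifying that the map $\rsins(P)\to\rsom(A^{(2)})$ descends continuously through the Möbius/orthogonal quotients and has contractible fibers with a continuous section, in analogy with Lemma \ref{lem:insLawrence} but now with the lex-lifting structure of Corollary \ref{cor:stereographicispositive} playing the role of the Lawrence extension. Everything else is a straightforward composition: Kortenkamp provides the neighborly skeleton, Theorem \ref{thm:neighborlylift} propagates neighborliness through both further liftings, Theorem \ref{thm:rigid} collapses $\rsom$ with $\rspol$, and Lemma \ref{lem:liftingsurjective} reassembles the homotopy chain.
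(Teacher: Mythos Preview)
Your construction of $P$ via Kortenkamp followed by two lexicographic liftings, and your use of rigidity (Theorem~\ref{thm:rigid}) to identify $\rspol(P)$ with $\rsom(A^{(3)})$, match the paper's argument essentially verbatim. One minor slip: Theorem~\ref{thm:neighborlylift} does \emph{not} assert that a single lexicographic lifting of a neighborly configuration is neighborly; it only says the \emph{double} lift $\wwh{A}$ is. So your claim that $A^{(2)}$ ``remains neighborly'' is unjustified --- but it is also unnecessary, since you only ever use neighborliness of $A^{(3)}$.

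The substantive gap is in your last paragraph. You describe the fiber of the stereographic map $\rsins(P)\to\rsom(A^{(2)})$ as ``parametrized by the choice of the sphere center and radius'' and conclude that it is star-convex. This is not correct. Fix an affine representative $V'$ of a class in $\rsom(A^{(2)})$; by Lemma~\ref{lem:stereographic}\eqref{it:spspheres}, the inverse stereographic lift $\sterproj^{-1}(V')\cup\{\NP\}$ has the oriented matroid of $A^{(3)}$ if and only if $V'$ satisfies a specific collection of \emph{circumsphere} (in/out) conditions, one for each basis of the chirotope of $A^{(3)}$ not containing the label of $\NP$. Affine transformations of $V'$ do not preserve circumspheres, so as you range over the affine class (modulo similarities) the locus where these conditions hold is a genuinely semi-algebraic set, not a ball of ``sphere parameters''. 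The paper says exactly this immediately after its proof of the lemma: one can show the fibers are non-empty via Corollary~\ref{cor:stereographicispositive}, but one \emph{cannot control their topology}; Figure~\ref{fig:disconnected} is meant to illustrate how disconnected fibers can occur. In particular, your ``barycenter of the fiber'' section is not available.

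The paper's workaround is to bypass the fiber analysis entirely and build the section by hand. Concretely, starting from a realization of $A_2$ one performs a \emph{Delaunay} lexicographic lifting (Lemma~\ref{lem:surjective}) and then applies $\sterproj^{-1}$; by Corollary~\ref{cor:stereographicispositive} this lands in $\rsins(P)$. The only thing to check is that the heights in the Delaunay lifting can be chosen to depend continuously on the input, and the paper invokes Hadamard's determinant inequality to produce explicit continuous height functions. This yields the section (hence the retraction) without any claim about the fibers, and is precisely why the statement of the lemma stops at ``retract'' rather than ``homotopy equivalent''. Your outline would, if it worked, actually prove the stronger homotopy-equivalence statement --- which the authors flag as open.
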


\begin{proof}
For convenience, set $d = n - 4$.  Since $A$ is a $d$-dimensional configuration 
of $d + 4$ points, 
we can apply 
 the 
sequence of lexicographic liftings of Theorem~\ref{thm:Kortenkamp} to obtain a 
neighborly configuration $A_2$ of $2n$ points in general position 
in~$\RR^{2n-4}$. This configuration is obtained by lexicographic liftings and 
hence the corresponding realization spaces are homotopy equivalent, 
$\rsom(A) \htpy \rsom(A_2)$, by Lemma~\ref{lem:liftingsurjective}.
 
Now, we can apply a lexicographic lifting and a positive lexicographic lifting 
successively to obtain~$A_3=\wwh{A}\!\!{}_2$, which is a configuration of $N=2n+2$ points in 
general position in $\RR^{D}$, where $D=2n-2$. 
The convex hull of $A_3$ is a neighborly polytope $P$
by Theorem~\ref{thm:neighborlylift}. 
We will build a continuous surjection from $\rsins(P)\times \RR_{>0}^{N-1}$ onto $\rsom(A_2)$.

Let $B\subset\RR^{N\times D}$ be an inscribed realization of $P$, which is even-dimensional and 
neighborly. By Theorem~\ref{thm:rigid}, its oriented matroid is rigid, and hence 
the matroid of the vertices of $P$ coincides with the matroid of~$A_3$. Therefore,
the stereographic projection $\sterproj(B)$ of $B$ from $a_N$ is always a realization of $\wh A_2$.
Consider then the map $\varphi:\rsins(P)\times \RR_{>0}^{N-1}\to \rsom(\wh A_2)$ that maps $(B,\lambda)$ onto 
the configuration of vectors $\{(\lambda_i\sterproj(B_i),\lambda_i)\}_{1\leq i\leq N-1}\in \rsom(\wh A_2)\subseteq  \RR^{(N-1)\times D} $. The map is well defined,
because M\"obius transformations of $B$ induce similarities of $\sterproj(B)$, which
are affine transformations and hence induce linear transformations on $\varphi(B,\lambda)$.

Now we can use the projection map $\psi:\rsom(\wh A_3)\to \rsom(A_3) $ of Lemma~\ref{lem:liftingsurjective} to obtain a
realization of $A_3$.
From Corollary~\ref{cor:stereographicispositive} we deduce that the composition map $\psi\circ \varphi:\rsins(P)\times \RR_{>0}^{N-1}\to \rsom(A_3)$ is 
surjective. To conclude that $\psi\circ \varphi$ is a retraction, we have to exhibit a continuous inverse injection. 
But the construction of Corollary~\ref{cor:stereographicispositive} can easily be
performed in a continuous way. For example, one can use Hadamard's determinant inequalities
to find continuous heights that fulfill the constraints of Delaunay lexicographic liftings.
\end{proof}

 \begin{figure}[ht]
\centering
\includegraphics[width=.35\linewidth]{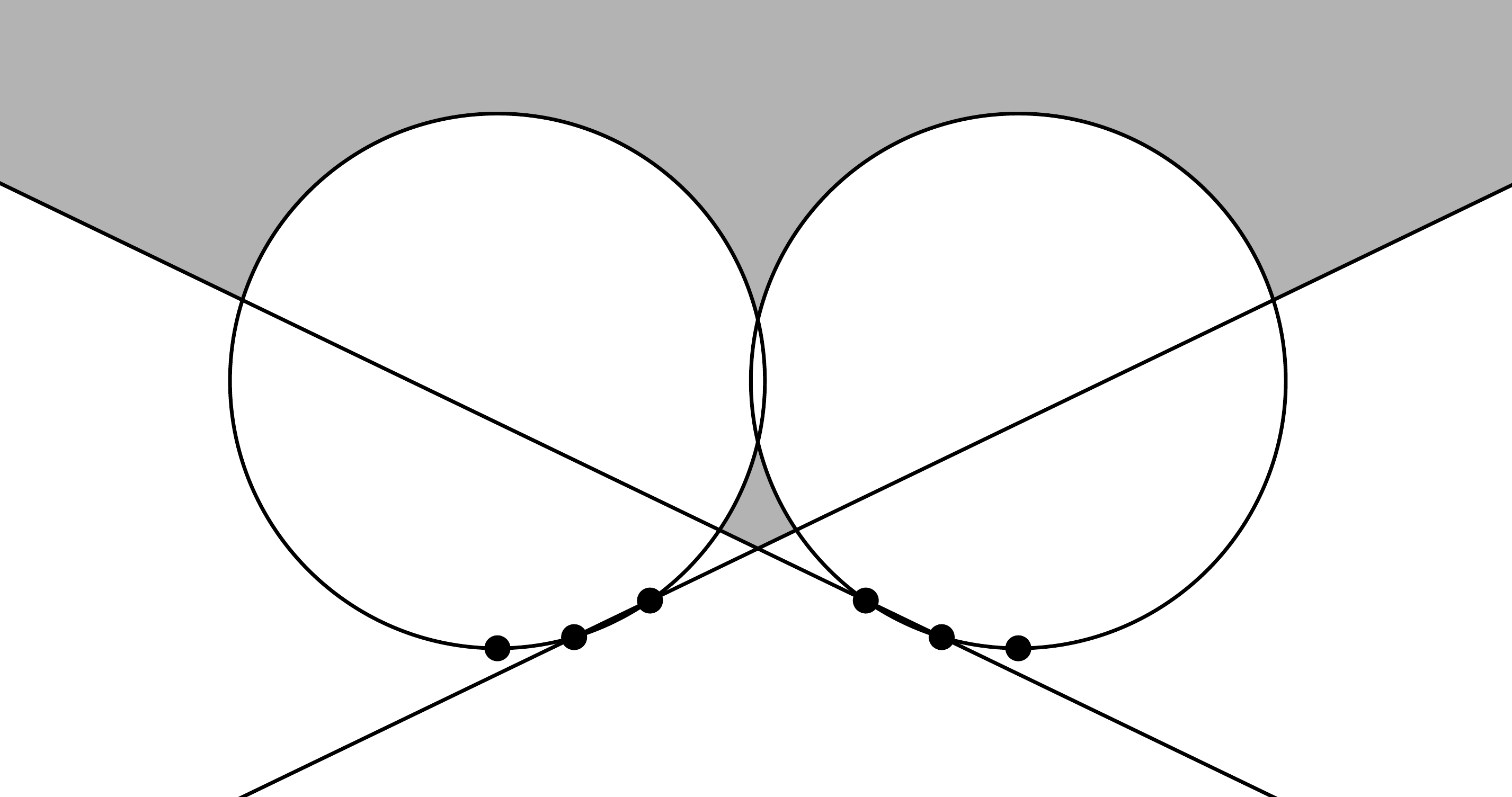}
\caption{Any point in the shaded area gives rise to the same Delaunay triangulation.}
\label{fig:disconnected}
\end{figure}
The reason why we cannot strengthen the statement to homotopy equivalence between 
$\rsins(P)$ and $\rsom(A)$, is that we do not understand 
the fibers of 
the map $\rsins(P)\twoheadrightarrow \rsom(A_3)$. We can prove that they 
are non-empty with Corollary~\ref{cor:stereographicispositive} but we cannot control 
their topology.  (Figure~\ref{fig:disconnected} shows an example of how disconnected
fibers might arise.)

\begin{thm}\label{thm:neighborly}
For every open primary basic semi-algebraic set $S$ there is a (neighborly) Delaunay 
triangulation and an inscribed simplicial (neighborly) polytope 
such that $S$ is a retract of their realization spaces, up to homotopy equivalence.
\end{thm}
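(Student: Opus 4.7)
My plan is to combine Mn\"ev's Universality Theorem with the main technical Lemma~\ref{lem:mainlifting}, and then transport the resulting inscribed polytope to a Delaunay triangulation by stereographic projection.

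First, since $S$ is open, Theorem~\ref{thm:universality} yields a uniform rank-$3$ oriented matroid---equivalently, a $2$-dimensional point configuration $A$ in general position---whose realization space $\rsom(A)$ is stably equivalent, and hence homotopy equivalent, to $S$. The linear complexity bound in Theorem~\ref{thm:universality} will propagate through the construction.

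Second, I apply Lemma~\ref{lem:mainlifting} to this $A$. It produces an inscribed neighborly polytope $P$ together with spaces $X\htpy \rsins(P)$ and $Y\htpy \rsom(A) \htpy S$ such that $Y$ is a retract of $X$. In particular $S$ is, up to homotopy equivalence, a retract of $\rsins(P)$. Since $P$ is neighborly it is automatically simplicial, which settles the inscribed simplicial (neighborly) polytope half of the theorem.

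Third, to produce the Delaunay triangulation $T$, I stereographically project the vertices of $P$ from the distinguished vertex $a_N$ playing the role of the north pole in the final positive lexicographic lift of the construction of Lemma~\ref{lem:mainlifting}. By Corollary~\ref{cor:stereographicispositive}, the image is a realization of the Delaunay lexicographic lift $\wh A_2$ used in that construction, so its Delaunay subdivision $T=\Del(\wh A_2)$ is a \emph{triangulation} and, since $\wh A_2$ is itself (even-dimensional) neighborly, $T$ is neighborly as well. A stereographic-projection argument in the spirit of Lemma~\ref{lem:stackedinscribableisdelaunay}, together with Remark~\ref{rmk:fixedboundary}, identifies $\rsins(P)$ with $\rsdel(T)$, so the retraction onto $S$ obtained in the second step transports to a retraction $\rsdel(T)\twoheadrightarrow S$ up to homotopy.

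The main obstacle is the last identification: because $\conv(\wh A_2)$ is in general not a simplex, Lemma~\ref{lem:stackedinscribableisdelaunay} does not apply verbatim, and I would need either to use the ``prescribed boundary'' variant of $\rsdel(T)$ from Remark~\ref{rmk:fixedboundary}, or to invoke the rigidity of the oriented matroid of the even-dimensional neighborly polytope $P$ (Theorem~\ref{thm:rigid}) to force the combinatorics of the boundary of $T$ to be determined by its interior. Either resolution suffices at the level of homotopy equivalence, so the retraction from Lemma~\ref{lem:mainlifting} descends to a homotopy retraction $\rsdel(T)\twoheadrightarrow S$, completing the proof.
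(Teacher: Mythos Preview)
Your overall strategy is right, but there are two genuine gaps.

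First, you cannot apply Lemma~\ref{lem:mainlifting} directly to the planar configuration $A$ coming from Mn\"ev's theorem. That lemma requires a configuration of $n$ points in general position in $\RR^{n-4}$; a rank-$3$ configuration with $n$ points fits this format only when $n=6$. The paper inserts a missing step here: by oriented matroid duality (and a reorientation to make the dual acyclic), the rank-$3$ matroid on $n$ elements becomes a configuration of $d+4$ points in $\RR^d$ with homeomorphic realization space, and \emph{that} is what gets fed into Lemma~\ref{lem:mainlifting}.

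Second, your passage from $\rsins(P)$ to $\rsdel(T)$ is where the real difficulty lies, and neither of your proposed fixes closes it. Using the prescribed-boundary variant changes the statement being proved. The rigidity argument goes in the wrong direction: rigidity of $P$ says that the \emph{full} face lattice of $P$ determines the oriented matroid of its vertices, but from a Delaunay realization $B$ of $T$ you only recover the facets of the lifted polytope \emph{not} containing $\NP$; nothing forces $\partial\conv(B)$, and hence the facets through $\NP$, to agree with those of $P$. (Incidentally, $\wh A_2$ is odd-dimensional, not even-dimensional, and is nowhere claimed to be neighborly.) The paper avoids this issue rather than arguing through it: it stacks an extra vertex onto a chosen facet $\{a_1,\dots,a_d\}$ of $P$ to form $P'$, and correspondingly inserts a point $a_0$ into the interior of that face of $A_2$ before performing the positive Delaunay lexicographic lift, with the order chosen so that the resulting Delaunay triangulation has a \emph{simplex} as its convex hull in every realization. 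Then Lemma~\ref{lem:stackedinscribableisdelaunay} applies verbatim and yields $\rsins(P')\cong\rsdel(T')$, transporting the retraction.
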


\begin{proof}
A straightforward consequence of the Universality Theorem~\ref{thm:universality} is that
realization spaces of oriented matroids of configurations of $d+4$ points in $\RR^{d}$ 
exhibit universality. In particular, for every open primary basic semi-algebraic set $S$ there
is a configuration $A$ of $d+4$ points in general position in $\RR^{d}$ whose realization space
is homotopy equivalent to $S$. The proof is direct using oriented matroid duality (see \cite[Chapter~8]{OrientedMatroids1993})
after reorienting some elements (compare \cite[Corollary 6.16]{Ziegler1995}).

Hence, by Lemma~\ref{lem:mainlifting}, there is an inscribed simplicial neighborly $d$-polytope $P$ whose
realization space admits a continuous surjection onto a set homotopy equivalent to $S$.

For the claim concerning Delaunay triangulations, we consider the polytope $P'$
obtained by stacking a vertex on the facet $F=\{a_1,\dots,a_d\}$ of $P$. (The 
face lattice of $P'$ coincides with that of $P$, except that $F$ is replaced with its
stellar subdivision.)

We claim that every realization of $A$ can be lifted to an inscribed realization of~
$P'$ (and by construction, every realization of $P'$ can be projected to a realization of $A$). 
Indeed, to the configuration $A_2$ of Lemma~\ref{lem:mainlifting}, add a point
$a_0$ in the relative interior of $\conv(a_1,\dots,a_{d-1})$ and then apply a positive
Delaunay lexicographic lifting with order $a_1,\dots,a_{d-1},a_0,a_d,a_{d+1},\dots$. 
The Delaunay triangulation of this configuration clearly contains the stellar subdivision
of the simplex $\{a_1,\dots,a_d\}$.

An application of Lemma~\ref{lem:stackedinscribableisdelaunay} then concludes the proof.
\end{proof}

\subsection{Complexity}

A closer look into the proof of Lemma~\ref{lem:mainlifting} shows that all the operations
that we use are at the oriented matroid level 
(i.e., can be also applied to non-realizable matroids) and take only polynomial time. 
Therefore, for each rank~$3$ oriented matroid $M$ we can construct a (combinatorial)
Delaunay triangulation that is realizable \emph{if and only if} $M$ is.
An important consequence of the Universality Theorem
is Corollary~\ref{cor:omhard}, which states that realizability of rank~$3$ oriented matroids is polynomially equivalent to 
ETR \cite{Mnev1988}\cite{Shor1991}. Lemma~\ref{lem:mainlifting} implies that realizability of Delaunay triangulations is equally hard.

\begin{cor}\label{cor:hard}
The {realizability problem for Delaunay triangulations and simplicial inscribed polytopes} 
is polynomially equivalent to the \emph{existential theory of the reals (ETR)}. 
\end{cor}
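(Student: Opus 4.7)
The plan is to prove polynomial equivalence by establishing reductions in both directions. For the easy direction---Delaunay realizability reduces to ETR---I would observe that a combinatorial Delaunay triangulation $T$ on $n$ labeled points in $\RR^d$ has a realization space $\rsdel(T)$ cut out by polynomially many polynomial equations and strict inequalities with integer coefficients (one determinantal empty-sphere inequality per pair (cell of $T$, point not in that cell), together with general-position conditions). Hence deciding whether $\rsdel(T)$ is non-empty is expressible as an ETR instance of size polynomial in the description of $T$, and the same reasoning applies verbatim to the face lattice of an inscribed simplicial polytope.

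For the hard direction, I would chain two reductions: ETR reduces to realizability of rank-$3$ oriented matroids in polynomial time by Corollary~\ref{cor:omhard}, and I would then reduce rank-$3$ oriented matroid realizability to Delaunay triangulation realizability by running the pipeline behind Lemma~\ref{lem:mainlifting} together with the stacking step in the proof of Theorem~\ref{thm:neighborly} directly on the oriented matroid. The key observation is that every ingredient is purely combinatorial: Kortenkamp's sequence (Theorem~\ref{thm:Kortenkamp}), the positive lexicographic lifting of Theorem~\ref{thm:neighborlylift}, the stellar subdivision on the facet $\{a_1,\dots,a_d\}$, and the passage to a simplicial sphere via the inverse stereographic correspondence of Lemma~\ref{lem:stackedinscribableisdelaunay} can all be executed on chirotopes (cf.\ Remark~\ref{rmk:om}), independently of whether the input matroid is realizable. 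This yields a combinatorial triangulation $T$ (and a face lattice of a neighborly inscribed simplicial polytope $P$) in polynomial time.

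It then remains to show that $T$ is realizable as a Delaunay triangulation if and only if the input rank-$3$ matroid $M$ is realizable. The ``if'' direction is exactly what Lemma~\ref{lem:mainlifting}, Corollary~\ref{cor:stereographicispositive}, and the proof of Theorem~\ref{thm:neighborly} deliver on realizable input. For ``only if'', a realization of $T$ lifts by Lemma~\ref{lem:stackedinscribableisdelaunay} to an inscribed realization of the associated neighborly polytope $P$; because $P$ is even-dimensional and neighborly, its oriented matroid is rigid (Theorem~\ref{thm:rigid}), so the inscribed realization realizes the chirotope produced by the lifting pipeline. Iterating the surjective projection of Lemma~\ref{lem:liftingsurjective}---which is merely linear projection along each lifting direction---then extracts a realization of $M$, and extracting coordinates is polynomial arithmetic work.

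The principal obstacle is bookkeeping the complexity throughout: verifying that the Mnev--Shor construction from Theorem~\ref{thm:universality} outputs an oriented matroid of linear size, that each (Delaunay/positive) lexicographic lifting adds only $O(1)$ elements and produces the new chirotope in polynomial time from the old one, and that the final combinatorial triangulation and inscribed polytope admit polynomial-size descriptions. Once these estimates are in hand, composing the two-way reductions gives polynomial equivalence with ETR, and NP-hardness of Delaunay (and inscribed simplicial) realizability follows from NP-hardness of ETR.
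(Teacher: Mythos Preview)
Your proposal is correct and follows essentially the same approach as the paper: the paper's argument (given in the paragraph preceding the corollary rather than as a formal proof) is precisely that the operations of Lemma~\ref{lem:mainlifting} act at the oriented matroid level in polynomial time, yielding for each rank-$3$ matroid $M$ a combinatorial Delaunay triangulation realizable if and only if $M$ is, after which Corollary~\ref{cor:omhard} finishes. You make the two directions and the complexity bookkeeping more explicit than the paper does, but the strategy is identical; one small imprecision is that the sphere $Q$ produced by Lemma~\ref{lem:stackedinscribableisdelaunay} is the stacked polytope $P'$ (with the extra apex), not the neighborly $P$ itself, so to invoke rigidity you should first delete the stacked vertex to recover an inscribed realization of $P$.
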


Another consequence of the universality theorem for Delaunay triangulations is that
realization spaces can have an exponential number of connected components.

\begin{cor}\label{cor:asymptotic}
For every $m\geq 1$ there exist configurations of $O(m)$ points in general position in $\RR^{O(m)}$ whose 
realization spaces as Delaunay triangulations have at least $2^m$ connected components.
\end{cor}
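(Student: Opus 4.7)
The plan is to combine the quantitative, open version of the Universality Theorem~\ref{thm:universality} with the retract construction of Lemma~\ref{lem:mainlifting}, starting from an open primary basic semi-algebraic set whose number of connected components is exponential in its arithmetic complexity. The prototype is
\[S_m := \set{(x_1,\dots,x_m)\in\RR^m}{x_i^2-1>0 \text{ for all } i=1,\dots,m},\]
which is open, has exactly $2^m$ connected components (one for each choice of signs of the coordinates), and is defined by $m$ strict polynomial inequalities of bounded complexity, so its total arithmetic complexity is $O(m)$.

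By Theorem~\ref{thm:universality} applied to $S_m$, there is a uniform rank-$3$ oriented matroid $M$ on $N = O(m)$ elements whose realization space is stably equivalent to $S_m$; in particular, $\rsom(M)$ has at least $2^m$ connected components. Passing to the dual matroid, with suitable reorientations, exactly as in the first paragraph of the proof of Theorem~\ref{thm:neighborly}, we obtain a uniform oriented matroid of rank $N-3$ realized by a point configuration $A$ of $N$ points in general position in $\RR^{N-4}$, still with $\rsom(A)$ having at least $2^m$ connected components. Applying Lemma~\ref{lem:mainlifting} to $A$ with $n = N$ then produces an inscribed neighborly polytope $P$ with $2N+2 = O(m)$ vertices in $\RR^{2N-2} = \RR^{O(m)}$, together with spaces $X\htpy \rsins(P)$ and $Y\htpy \rsom(A)$ and a retraction $X\sur Y$. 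Since any retraction is a surjection on sets of path components (because its composition with the section is the identity of $Y$), the space $\rsins(P)$ has at least as many path components as $\rsom(A)$, that is, at least $2^m$.

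Finally, the stacking construction at the end of the proof of Theorem~\ref{thm:neighborly}, combined with Lemma~\ref{lem:stackedinscribableisdelaunay}, converts $P$ into a Delaunay triangulation $T$ on $O(m)$ points in $\RR^{O(m)}$ whose realization space is homeomorphic to the inscribed realization space of the stacked polytope. This final step adds only a single vertex and preserves the lower bound of $2^m$ on the number of connected components of the realization space. The main obstacle is purely quantitative: one must carefully track that each ingredient (Mn\"ev's theorem for $S_m$, matroid duality, the Kortenkamp--Padrol liftings hidden inside Lemma~\ref{lem:mainlifting}, and the stacking step) only introduces a linear overhead in both the number of points and the ambient dimension, so that both remain $O(m)$. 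No new topological or algebraic argument is needed beyond the elementary observation that if $Y$ is a retract of $X$, then $X$ has at least as many path components as $Y$.
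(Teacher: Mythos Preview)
Your proposal is correct and follows essentially the same route as the paper: exhibit an open primary basic semi-algebraic set with $2^m$ components and $O(m)$ arithmetic complexity, then feed it through the Universality Theorem~\ref{thm:universality} and Lemma~\ref{lem:mainlifting} (with the stacking step from Theorem~\ref{thm:neighborly}), using the retraction to transfer the lower bound on components. The only difference is the choice of semi-algebraic set---the paper uses the single univariate inequality $f_m(x)>0$ with $f_m$ obtained by iterating $x\mapsto x^2-2$, whereas you use the product system $\{x_i^2-1>0\}_{i=1}^m$; both have total arithmetic complexity $O(m)$ and at least $2^m$ components, so either works.
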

\begin{proof}
Consider the polynomial $f_m(x)$ obtained recursively as follows:
 \begin{align*}
  f_0(x)=x^2-2,&&f_{ k+1}(x)=f_ {k}(f_0(x)).
 \end{align*}
That is, $f_1(x)=(x^2-2)^2-2$, $f_2(x)=((x^2-2)^2-2)^2-2$, $f_3(x)=(((x^2-2)^2-2)^2-2)^2-2$, and so on.
It is not hard to check that $f_m(x)$ has $2^{m+1}$ distinct simple real roots and that its arithmetic 
complexity is $O(m)$. 
The semi-algebraic set of points fulfilling $f_m(x)>0$ has at least $2^{m}$ connected components. 

Our claim now follows by the Universality Theorem~\ref{thm:universality} and Lemma~\ref{lem:mainlifting}.
\end{proof}

As a final remark in this section, we provide our smallest example of a Delaunay triangulation with disconnected realization space. 
It can constructed by applying Lemma~\ref{lem:mainlifting},  together with the stacking technique of the proof Theorem~\ref{thm:neighborly}, to the uniform rank~$3$ oriented matroid with $14$ elements found by Suvorov in 1988 \cite{Suvorov1988} (see also \cite[Chapter~8]{OrientedMatroids1993}), which has a disconnected  realization space. 

\begin{cor}\label{cor:small-config}
There is a $25$-dimensional configuration of $30$ points whose Delaunay triangulation has 
a disconnected realization space.
\end{cor}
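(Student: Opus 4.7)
The plan is to instantiate the construction of Theorem~\ref{thm:neighborly} on Suvorov's rank-$3$ uniform oriented matroid $M$ with $14$ elements, whose realization space is known to be disconnected \cite{Suvorov1988}. The main work has already been done; what remains is to track the parameters and to verify that disconnectedness propagates through the retract, the stacking, and the stereographic projection.

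First, I would pass from $M$ to a point configuration via the duality-and-reorientation argument in the proof of Theorem~\ref{thm:neighborly}: since $M$ has rank $3$ on $14$ elements, its (suitably reoriented) dual is realized by a configuration $A$ of $d+4=14$ labeled points in general position in $\RR^{d}$ with $d=10$, and $\rsom(A)\htpy\rsom(M)$, so $\rsom(A)$ is disconnected.

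Next I would apply Lemma~\ref{lem:mainlifting} with $n=14$. This produces an inscribed neighborly polytope $P$ with $2n+2=30$ vertices in $\RR^{2n-2}=\RR^{26}$, together with spaces $X\htpy\rsins(P)$ and $Y\htpy\rsom(A)$ such that $Y$ is a retract of $X$. A retraction $r\colon X\sur Y$ is continuous and surjective, so the preimages $r^{-1}(Y_1),r^{-1}(Y_2)$ of two distinct components of $Y$ form a nontrivial open partition of $X$; hence $X$ (and therefore $\rsins(P)$) has at least as many connected components as $\rsom(A)$, so $\rsins(P)$ is disconnected.

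Finally, I would carry out the Delaunay step from the proof of Theorem~\ref{thm:neighborly}: at the level of the intermediate configuration $A_2$ of Lemma~\ref{lem:mainlifting}, insert an extra point in the relative interior of a facet and continue the same sequence of lexicographic liftings in the prescribed order. This yields an inscribed polytope $P'$ with $31$ vertices in $\RR^{26}$ whose Delaunay triangulation, after stereographic projection from the stacked vertex, is a Delaunay triangulation $T$ with $30$ vertices in $\RR^{25}$; by Lemma~\ref{lem:stackedinscribableisdelaunay}, $\rsdel(T)\cong\rsins(P')$. Since $\rsins(P')$ surjects continuously onto $\rsins(P)$ (the stacked vertex contributes an extra convex parameter in each inscribed realization), $\rsins(P')$ inherits the disconnectedness of $\rsins(P)$, and therefore so does $\rsdel(T)$. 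The only delicate point — not really an obstacle, but the place to be careful — is ensuring that the stacking step does not accidentally reconnect the fibers; this is immediate from the Delaunay lexicographic-lifting construction, which is continuous in the data of a realization of $P$ and adds fibers homeomorphic to convex sets.
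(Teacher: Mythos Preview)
Your proposal is correct and follows essentially the same route as the paper: start from Suvorov's uniform rank-$3$ matroid on $14$ elements, dualize to a configuration of $14$ points in $\RR^{10}$, apply Lemma~\ref{lem:mainlifting} (with $n=14$) and then the stacking step from the proof of Theorem~\ref{thm:neighborly}, and track the parameters to land at $30$ points in $\RR^{25}$. Your argument that disconnectedness propagates along the retraction and the forget-the-stacked-vertex surjection is exactly the content the paper leaves implicit; the only cosmetic difference is that you route disconnectedness through $\rsins(P)$ as an intermediate, whereas the paper phrases the retraction directly from $\rsins(P')$ onto $\rsom(A)$.
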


\bibliographystyle{amsalpha}
\bibliography{universality}
\appendix
\end{document}